\theoremstyle{plain}
\newtheorem{thm}{Theorem}[section]
\newtheorem{mainthm}{Theorem}
\newtheorem{lem}[thm]{Lemma}
\newtheorem{prop}[thm]{Proposition}
\newtheorem{conj}[thm]{Conjecture}
\newtheorem*{namedtheorem}{\theoremname}
\newcommand{\theoremname}{testing}
\newenvironment{named}[1]{\renewcommand{\theoremname}{#1}
    \begin{namedtheorem}}
    {\end{namedtheorem}}
\theoremstyle{definition}
\newtheorem{definition}[thm]{Definition}
\theoremstyle{remark}
\numberwithin{equation}{section}
\renewcommand{\leq}{\leqslant}
\renewcommand{\geq}{\geqslant}
\renewcommand{\nleq}{\nleqslant}
\newcommand{\norm}{\trianglelefteqslant}
\newcommand{\nin}{\notin}
\newcommand{\set}[1]{\{#1\}}
\newcommand{\setst}[2]{\{\,#1\mid #2 \}}
\newcommand{\inv}[1]{#1^{-1}}
\newcommand{\GL}{\operatorname{GL}}
\newcommand{\Baum}{\operatorname{Baum}}
\newcommand{\X}{\mathfrak{X}}
\renewcommand{\S}{\mathcal{S}}
\newcommand{\A}{\mathcal{A}}
\renewcommand{\hat}{\widehat}
\begin{document}

\title[Oliver's $p$-group conjecture]{2-subnormal quadratic offenders and
Oliver's $p$-group conjecture} 
\author{Justin Lynd} 
\address{Department of Mathematics, The Ohio State University, Columbus, OH 43210}
\email{jlynd@math.ohio-state.edu}
\date{\today}

\begin{abstract} 
Bob Oliver conjectures that if $p$ is an odd prime and $S$ is a finite
$p$-group, then the Oliver subgroup $\X(S)$ contains the Thompson subgroup
$J_e(S)$. A positive resolution of this conjecture would give the existence
and uniqueness of centric linking systems for fusion systems at odd primes. 
Using ideas and work of Glauberman, we prove that if $p \geq 5$, $G$ is a
finite $p$-group, and $V$ is an elementary abelian $p$-group which is an
F-module for $G$, then there exists a quadratic offender which is $2$-subnormal
(normal in its normal closure) in $G$.  We apply this to show that Oliver's
conjecture holds provided the quotient $G = S/\X(S)$ has class at most
$\log_2(p-2) + 1$, or $p \geq 5$ and $G$ is equal to its own Baumann subgroup.
\end{abstract} 
\maketitle

\section{Introduction}
Let $\mathcal{F}$ be a saturated fusion system on a finite $p$-group $S$.  If
$\mathcal{F} = \mathcal{F}_S(G)$ for some finite group $G$, then Broto, Levi,
and Oliver show \cite{BrotoLeviOliver2003b} that one can associate a
classifying space to $\mathcal{F}$ whose completion at $p$ has the homotopy
type of the completion of $BG$ at $p$.  The construction of such a classifying
space is based upon a centric linking system associated to $\mathcal{F}$. But
for arbitrary $\mathcal{F}$, it is an open question\footnote{This question has
now been resolved in the affirmative by Andrew Chermak
\cite{ChermakLocalities}, but Oliver's $p$-group conjecture is still open.} as
to whether an associated centric linking system (and thus a suitable
classifying space for $\mathcal{F}$) exists. See the survey article
\cite{BrotoLeviOliver2004} for more details.

In the course of proving the Martino-Priddy conjecture for odd primes
\cite{Oliver2004}, Oliver defines a certain subgroup $\X(S)$ of $S$, now called
the Oliver subgroup.  He shows that if $p$ is odd and the Thompson subgroup
$J_e(S)$ is contained in $\X(S)$, then the homological obstruction to
the existence and uniqueness of a centric linking system for $\mathcal{F}$
vanishes.

In this paper we investigate Oliver's (purely group-theoretic) conjecture that
this inclusion $J_e(S) \leq \X(S)$ always holds for odd $p$ and prove it for
$p$-groups whose quotient $G = S/\X(S)$ has small nilpotence class. 

\begin{mainthm}\label{main} 
Let $p$ be an odd prime, and suppose $S$ is a finite $p$-group
such that $G = S/\X(S)$ has nilpotence class at most $\log_2(p-2) + 1$. Then
$J_e(S) \leq \X(S)$, so Oliver's conjecture holds for $S$. 
\end{mainthm}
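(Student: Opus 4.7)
The plan is to argue by contradiction via Oliver's module-theoretic reformulation of the conjecture. Suppose $J_e(S) \nleq \X(S)$. Then $G := S/\X(S)$ admits an elementary abelian $\mathbb{F}_p G$-module $V$, assembled from $G$-invariant sections of $\X(S)$, that is an F-module for $G$: some nontrivial elementary abelian $A \leq G$ is an offender on $V$. When $p = 3$ the hypothesis forces the nilpotence class of $G$ to be at most one, and the abelian case of the conjecture is handled by Timmesfeld-style replacement; so I would assume $p \geq 5$. The paper's main group-theoretic theorem then delivers a quadratic offender $A \leq G$ which is $2$-subnormal, meaning $A \norm N$ where $N := A^G$.

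Writing $c$ for the nilpotence class of $G$, the hypothesis reads $2^{c-1} \leq p - 2$. The goal is to exploit $A \norm N$, quadraticity $V(A-1)^2 = 0$ in the group algebra $\mathbb{F}_p[N]$, and the nilpotence of $N$ to force $V(A-1) = 0$, contradicting that $A$ is a nontrivial offender. The workhorse is the identity
\[
v(a-1)(n-1) = v(n-1)(a^n - 1) + v(a^n - a),
\]
valid for $v \in V$, $a \in A$, $n \in N$, which moves an $A$-letter past an $N$-letter at the cost of a pure $A$-letter correction; the $2$-subnormality $a^n \in A$ is what keeps this correction inside $V(A-1)$. Iterating over $c-1$ rounds, with each round pairing an $A$-letter against a commutator drawn from successively deeper terms of $\gamma_2(N), \gamma_3(N), \ldots$, the outer $N$-entries are eventually pushed into $\gamma_{c+1}(N) = 1$, while the $A$-letters proliferate by a factor of at most two at each round, yielding at most $2^{c-1}$ of them. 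The numerical condition $2^{c-1} \leq p-2$ is precisely the threshold that keeps the binomial coefficients produced along the way invertible modulo $p$, so the iteration does not degenerate and collapses to $V(A-1) = 0$.

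The step I expect to be hardest is pinning down the combinatorics of this expansion precisely enough to match the optimal bound $\log_2(p-2) + 1$: a looser accounting would yield a substantially weaker class bound of the form $c \leq p-2$ or $c \leq \log_2 p$. I would model the analysis on Glauberman's bookkeeping in his work on quadratic pairs and weak closure, which is the source both of the $2$-subnormal offender and of the attendant commutator identities. A secondary obstacle is verifying that Oliver's construction of $V$ out of the sections of $\X(S)$ produces a single F-module to which the argument can be applied uniformly, rather than forcing a section-by-section reduction; this should be routine from the standard reformulation but needs to be checked.
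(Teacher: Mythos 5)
Your overall frame (contradiction via the module-theoretic reformulation, reduction to $p\geq 5$, and invoking the $2$-subnormal quadratic offender of Theorem~\ref{2subnormal}) matches the paper's starting point, but the core of your argument goes in a direction that cannot work. You propose to derive $V(A-1)=0$ from quadraticity, $A\norm \gen{A^G}$, the nilpotence class bound, and a rewriting identity in the group algebra — that is, to show the offender acts trivially, using only the F-module structure. This conclusion is false in general: take $G=\gen{a}\cong C_p$ acting on $V=\mathbf{F}_p^2$ by a single Jordan block of size $2$. Then $G$ has class $1\leq \log_2(p-2)+1$, $A=G$ is a quadratic offender normal in its (trivial) normal closure, and $V(A-1)\neq 0$. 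No amount of commutator bookkeeping can rescue this, because your argument never uses the hypothesis that actually carries the contradiction: $V=\Omega_1Z(\X(S))$ is a \emph{PS-module}, i.e., every nontrivial element of $\Omega_1Z(G)$ has minimum polynomial of degree at least $p$ on $V$. (Relatedly, $V$ is the single module $\Omega_1Z(\X(S))$, not something assembled from sections of $\X(S)$.)

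The paper's actual route is quite different after the offender is produced. Lemma~\ref{normalW} converts the $2$-subnormal quadratic offender into a normal \emph{elementary abelian} subgroup $W=\gen{a^G}$ of $G$ generated by conjugates of a quadratic element $a$. Since $W\norm G$, descending the lower central series gives $1\neq [W,G;k]\leq \Omega_1Z(G)$ for some $k$ at most $(\text{class of }G)-1$. Expanding the iterated commutator $[b,x_1,\dots,x_k]$ exhibits a nontrivial central element $z$ as a product of $2^k$ pairwise commuting conjugates of $b^{\pm1}$ (commuting because they all lie in the elementary abelian $W$); Lemma~\ref{commutingquadratic} then bounds the degree of the minimum polynomial of $z$ by $2^k+1$. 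The PS-module property forces that degree to be $p$, giving $p<2^k+2$ and hence class $\geq k+1>\log_2(p-2)+1$. So the exponent $2^k$ counts conjugates in a commutator expansion — it has nothing to do with invertibility of binomial coefficients mod $p$, which is where your numerology misidentifies the mechanism. To repair your proposal you would need to replace the goal ``$V(A-1)=0$'' with ``some nontrivial element of $\Omega_1Z(G)$ has small minimum polynomial degree,'' which is essentially the paper's argument.
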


Green, H\'ethelyi, and Mazza show in \cite{GHM2010} that $J_e(S) \leq \X(S)$
if the quotient $G$ is metabelian, of maximal class, or of $p$-rank at most
$p$.  They also settle the case where $G$ has class at most $4$, so the above
result is known for $p \leq 17$.  To our knowledge though, Theorem \ref{main}
provides the first nonconstant lower bound on the class of a counterexample to
Oliver's conjecture.

The \emph{Baumann subgroup} (see \cite{MeierfrankenfeldStellmacherStroth2003}) of a
finite $p$-group $G$ is defined as $\Baum(G) = C_G(\Omega_1Z(J_e(G)))$. In
particular, $\Baum(G) \geq J_e(G)$.  We define in Section \ref{definitions} the
$k$-\emph{th Oliver subgroup} for $3 \leq k \leq p$ in direct analogy with
$\X(S)$, which gives a chain of characteristic, self-centralizing subgroups
$\X_3(S) \leq \X_4(S) \leq \cdots \leq \X_p(S)$ of $S$. The last one $\X_p(S)$
is the original Oliver subgroup $\X(S)$. We prove the following.

\begin{mainthm}\label{baum}
Let $p \geq 5$ and $3 \leq k \leq p$.  Suppose $S$ is a finite $p$-group, and
set $G = S/\X_k(S)$.  If $\Baum(G) = G$, then $J_e(S) \leq \X_k(S)$.
In particular, taking $k = p$, Oliver's conjecture holds for $S$.
\end{mainthm}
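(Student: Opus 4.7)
The plan is to assume $J_e(S) \nleq \X_k(S)$ and then contradict the maximality built into the definition of $\X_k(S)$. Set $Q := \X_k(S)$, $V := \Omega_1 Z(Q)$, and $G := S/Q$. Since $Q$ is characteristic and self-centralizing in $S$, $V$ is a nontrivial elementary abelian normal subgroup of $S$ on which $G$ acts. The goal is to produce an $R \leq S$ with $R \norm S$, $Q < R$, and $[V, R; k-1] = 1$; appending such an $R$ to a chain defining $\X_k(S)$ would yield a longer chain of the same type, contradicting maximality.

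To find $R$, I would first argue that $V$ is an $F$-module for $G$: choose $A \leq S$ elementary abelian of maximal order with $A \nleq Q$; then a standard Thompson-type computation (cf.\ Oliver's original treatment of $\X(S)$) shows that the image $\bar A := AQ/Q$ satisfies $|\bar A| \cdot |C_V(\bar A)| \geq |V|$. Since $p \geq 5$, the main theorem of the paper then supplies a $2$-subnormal quadratic offender $\bar B \leq G$ on $V$: one has $[V, \bar B, \bar B] = 1$ and $\bar B \norm N \norm G$ for $N := \langle \bar B^G \rangle$. Let $R$ be the preimage of $N$ in $S$; then $R \norm S$ and $Q < R$ automatically.

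It remains to show $[V, R; k-1] = 1$. This is where the hypothesis $\Baum(G) = G$, equivalent to $\Omega_1 Z(J_e(G)) \leq Z(G)$, should enter. Each conjugate $\bar B^g$ lies in $N$ and is normal in $N$ (since $\bar B \norm N$ and $N \norm G$), and each acts quadratically on $V$. The Baumann hypothesis forces $\Omega_1 Z(J_e(G))$ to be centralized by all of $G$; since quadratic offenders sit inside $J_e(G)$, this produces relations among the $\bar B^g$ and their pairwise commutators that, together with the three-subgroups lemma and quadratic action, can be iterated to yield $[V, N; k-1] = 1$.

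The main obstacle is precisely this last step: bounding the nilpotency class of the $N$-action on $V$ by $k-1$. Quadratic action of $\bar B$ together with $\bar B \norm N$ does not on its own force $N$ to act with small class — the conjugates $\bar B^{g_i}$ have a priori nonvanishing cross-commutators on $V$ — and the Baumann hypothesis is the precise ingredient needed to control these cross-commutators. I expect the argument will involve careful commutator-collection manipulations (which behave well for $p \geq 5$, where $k-1 \leq p-1$) to drive the bound on the $N$-action class down to $k-1$, at which point $R$ contradicts the maximality of $\X_k(S) = Q$ and the theorem follows.
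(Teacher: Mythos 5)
Your proposal has a genuine gap at exactly the step you yourself flag as the ``main obstacle,'' and the paper resolves the difficulty by a route you do not anticipate. There is no reason to expect $[V, N;\,k-1] = 1$ for $N = \langle \bar B^G\rangle$: quadratic action of $\bar B$ and of each conjugate $\bar B^g$ on $V$ gives no control over the cross-commutators $[\bar B, \bar B^g]$ acting on $V$, and the Baumann hypothesis does not ``produce relations among the $\bar B^g$'' in any way you make precise. (Your parenthetical claim that quadratic offenders sit inside $J_e(G)$ is also unjustified: an offender need not be elementary abelian of maximal order in $G$.) In fact the paper never contradicts the maximality in the definition of $\X_k(S)$ by exhibiting a larger admissible normal subgroup; it derives a contradiction from the PS-module property instead.

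What the paper actually does with the $2$-subnormal offender $E$ is this: since $E \norm N := \langle E^G\rangle$ and $N \norm G$, one has $E \cap \Omega_1Z(N) \neq 1$, so $\Omega_1Z(N)$ contains quadratic elements. One then descends along the chain $W_i = [\Omega_1Z(N), G;\,i]$ to the last term $W_j$ containing a quadratic element $a$, and sets $W = \langle a^G\rangle$ (Lemma \ref{normalW}). The key property of $W$, extracted via Lemma \ref{greenlemma}, is that every $x \in G$ either centralizes $W$ or satisfies $[W,x,x] \neq 1$; that is, nothing acts quadratically on $W$. Thompson Replacement applied to the action of $G$ on $W$ (not on $V$) then forces every elementary abelian subgroup of $G$ of maximal order to centralize $W$, hence to contain $W$, so $W \leq \Omega_1Z(J_e(G))$. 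This is the one place the hypothesis $\Baum(G) = G$ enters: it pushes $W$ into $Z(G)$, contradicting the fact that $Z(G)$ has no quadratic elements because $V$ is a PS-module of degree $k \geq 3$. Your opening reductions (passing to $V \rtimes G$, noting $V$ is an F-module, invoking Theorem \ref{2subnormal}) match the paper, but the heart of the argument --- the descent to $W$ and the replacement argument on $W$ --- is absent from your proposal and cannot be supplied by the commutator-collection scheme you sketch.
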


Perhaps one reason there has not been a general attack on Oliver's
conjecture to date lies in its resistance to any kind of inductive argument.
The strategy for proving Theorem \ref{baum} gives some hint of how such an
inductive argument may possibly proceed, at least for $p \geq 5$. See Lemma
\ref{normalW} and Section \ref{inductiveapproach} for speculation on this
matter.

In general, we take the point of view of Green, H\'ethelyi, and Lilienthal
in \cite{GHL2008}, where they reformulate Oliver's conjecture in terms of a
statement about representations of the quotient group $G$ over ${\bf F}_p$. In
particular, in a putative counterexample to Oliver's conjecture, $V =
\Omega_1Z(\X(S))$ is an F-module for $G$.

Our methods rely on ideas of Glauberman \cite{Glauberman:priv} and a
modification of Glauberman's generalization of Thompson's Replacement Theorem
in \cite{Glauberman1997}.  For $p \geq 5$ and $V$ an F-module for $G$, the
modification gives rise to the existence of $2$-subnormal quadratic offenders
in $G$, and we use this to prove Theorems \ref{main} and \ref{baum}. Recall
that a subgroup $H$ of $G$ is $2$-\emph{subnormal} if it is normal in a normal
subgroup of $G$, or equivalently, normal in its normal closure in $G$. 

\begin{mainthm}\label{2subnormal}  
Suppose $p \geq 5$.  Let $V$ be an elementary abelian $p$-group
which is an F-module for the $p$-group $G$.  Then there exists a quadratic
offender on $V$ in $G$ which is normal in its normal closure in $G$.  
\end{mainthm}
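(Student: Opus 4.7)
The plan is to combine a minimality argument over the set of quadratic offenders with a Glauberman-style replacement. Since $V$ is an F-module for $G$ and $p \geq 5$, Glauberman's 1997 generalization of Thompson's Replacement Theorem \cite{Glauberman1997} promotes any offender to a quadratic one, so the set $\mathcal{Q}$ of quadratic offenders on $V$ in $G$ is nonempty. I would choose $A \in \mathcal{Q}$ so that its normal closure $N := \gen{A^G}$ has smallest possible order, and then among such $A$ so that $|N_N(A)|$ is as large as possible. The aim is to show that this $A$ satisfies $A \norm N$, which is precisely the 2-subnormality condition.

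Suppose for contradiction that $A$ is not normal in $N$. Since $N$ is a $p$-group strictly containing $A$, some $G$-conjugate $A^g$ lies in $N$ but fails to normalize $A$. I would attempt to construct a new quadratic offender $B \leq A A^g$ normalized by both $A$ and $A^g$; this would either shrink the normal closure below $N$ or, if $\gen{B^G} = N$, strictly enlarge the normalizer inside $N$, contradicting the lexicographic minimality in either case.

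The heart of the argument is the construction of this replacement $B$. A natural candidate is a commutator-based subgroup of $A A^g$ --- for instance $B = C_A([V,A^g]) \cdot C_{A^g}([V,A])$, or one of Glauberman's variants --- tailored so that $B$ is elementary abelian, normalized by $\gen{A, A^g}$, acts quadratically on $V$, and still satisfies the offender inequality $|B/C_B(V)|\cdot|C_V(B)| \geq |V|$. This is precisely the sort of replacement a modification of Glauberman's theorem ought to yield once $p \geq 5$: the commutator manipulations forcing $[V,B,B]=0$ require certain terms of degree up to $p-1$ to vanish appropriately, which breaks down at $p=3$ because of genuine cubic contributions. The main obstacle I anticipate is the simultaneous verification that $B$ is elementary abelian, acts quadratically on $V$, remains an offender, and has larger $N$-normalizer (or smaller $G$-normal closure) than $A$; I would follow the blueprint of Glauberman's 1997 argument together with the private communication \cite{Glauberman:priv} cited in the introduction, adapted to the configuration $A \leq A A^g \leq N$.
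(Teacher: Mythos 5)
Your overall framework (an extremal choice of quadratic offender plus a replacement step) is in the right spirit, but there is a genuine gap: the replacement construction, which is the entire technical content of the theorem, is never carried out. You yourself flag that ``the main obstacle'' is verifying simultaneously that $B$ is elementary abelian, quadratic, an offender, and improves the extremal data --- but that verification \emph{is} the theorem. The candidate $B = C_A([V,A^g])\cdot C_{A^g}([V,A])$ is not shown (and is not in general true) to be nontrivial, to act quadratically, to satisfy the offender inequality, or to be normalized by $\langle A, A^g\rangle$. Even granting a suitable $B$, the claimed dichotomy does not follow: nothing forces $N_N(B)$ to contain $N_N(A)$ properly when $\langle B^G\rangle = N$, so the lexicographic minimality (smallest normal closure, then largest normalizer) is not an ordering that any replacement process is shown to improve. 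Finally, the heuristic that $p\geq 5$ enters through ``terms of degree up to $p-1$'' in forcing $[V,B,B]=0$ misidentifies where the prime restriction actually bites.

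For comparison, the paper does not work with quadratic offenders in $G$ at all. It passes to $S = V \rtimes G$ and considers abelian subgroups of $S$ of maximal order that are product subgroups $(A\cap V)(A\cap G)$ not contained in $V$, choosing $A$ maximal with respect to a central series of $S$ through $V$ (maximizing every $|A\cap Z_i|$). Quadraticity of $E = A\cap G$ and the offender inequality then come for free from $|A|\geq |V|$ and the fact that $V$ normalizes such a maximal $A$ (Lemma \ref{Atimes}). The replacement step is Glauberman's Lie-ring argument: the relevant normal closure $\langle A^T\rangle$ has class at most $3$, so for $p\geq 5$ the Lazard correspondence applies, $\delta = \log(\alpha)$ is a derivation with $\delta^3=0$, and $A^* = A_1 + \delta^2(A)$ is abelian of the same order with $A <_{\S} A^*$ for \emph{every} central series $\S$ --- this is what makes the extremal choice terminate, with Lemma \ref{centdelta} ensuring $A^*$ does not collapse into $V$. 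The $2$-subnormality of $E$ is then extracted by a separate mutual-normalization induction (Theorem \ref{normeachother}), not by a normalizer-growth argument inside $N$. None of these ingredients appears in your sketch, so as written the proposal does not constitute a proof.
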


The organization of the paper is as follows. In Section~\ref{definitions},
we recap definitions and outline the reformulation of Oliver's conjecture
due to Green, et al. Section~\ref{productsubgroups} contains a couple of
elementary lemmas needed for Section~\ref{offenders}, where we prove
Theorem~\ref{2subnormal}. We use Theorem~\ref{2subnormal} in
Section~\ref{classbound} to produce in a counterexample an elementary abelian
subgroup generated by quadratic elements, and we leverage this subgroup to prove
Theorems~\ref{main} and \ref{baum}. In the final Section we record an
observation which could allow for a inductive approach to the conjecture.

\section{Definitions and preliminaries}\label{definitions}

Let $p$ be an odd prime. We outline in this section the
module-theoretic version of Oliver's conjecture from \cite{GHL2008}.

\begin{definition}\label{oliversubgroup} 
Let $3 \leq k \leq p$. The $k$-\emph{th Oliver subgroup} of $S$ is the largest subgroup
$\X_k(S)$ such that there exists a series 
\[ 
1 = Q_0 < Q_1 < \cdots <
Q_{n-1} < Q_n = \X_k(S) 
\] 
with each $Q_i$ normal in $S$ and with the
property that for all $1 \leq i \leq n$, 
\[ 
[\Omega_1(C_S(Q_{i-1})), Q_i;\,k-1] = 1.  
\] 
\end{definition}

Taking $k = p$ in the definition, we get Oliver's original definition, and so
$\X_p(S) = \X(S)$. 
For each $k$, $\X_k(S)$ is a characteristic subgroup of $S$.  Furthermore, as is
clear from the definition, $\X_k(S)$ contains every normal subgroup of $S$ of
nilpotence class at most $k-2$. In particular it contains every normal abelian
subgroup of $S$, and so $C_S(\X_k(S)) = Z(\X_k(S))$ (that is, $\X_k(S)$ is
self-centralizing). A recent preprint from Green, H\'ethelyi and Mazza \cite{GHM2011}
independently calls attention to $\X_3(S)$, which is their $\mathcal{Y}(S)$.

Recall the \emph{Thompson subgroup} is the subgroup $J_e(S)$ of $S$ generated by
the elementary abelian subgroups of maximum order. We will keep the $e$ subscript
throughout, since in Section \ref{offenders}, we will be working with (not
necessarily elementary) abelian subgroups of maximum order. In
\cite[Conjecture \!3.9]{Oliver2004}, Bob Oliver poses the following.

\begin{conj}[{\bf Oliver}]
Let $S$ be a finite $p$-group with $p$ odd. Then $J_e(S) \leq \X(S)$.  
\end{conj}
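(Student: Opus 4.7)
The plan is to argue by contradiction using the machinery announced in the introduction. Let $S$ be a minimal counterexample, set $G = S/\X(S)$ and $V = \Omega_1Z(\X(S))$. Since $\X(S)$ is self-centralizing, $V$ is a faithful $\mathbf{F}_pG$-module; and the failure $J_e(S) \nleq \X(S)$ translates, via the Green--H\'ethelyi--Lilienthal reformulation, to the statement that $V$ is an F-module for $G$, with some elementary abelian subgroup of $S$ of maximum order projecting to a nontrivial offender on $V$.

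Assuming $p\geq 5$, the next step is to apply Theorem~\ref{2subnormal} to extract a quadratic offender $A\leq G$ which is normal in its normal closure $N = \langle A^G\rangle$. Let $N^*\leq S$ be the preimage of $N$, so that $N^*\norm S$. The strategy is to show that $N^*$ can be appended to a chain witnessing $\X(S)$, contradicting its maximality. Concretely, this reduces to verifying
\[
[\Omega_1(C_S(\X(S))),\,N^*;\,p-1]=1,
\]
equivalently $[V,\,N;\,p-1]=1$ inside $G$.

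The workhorse is a bootstrapping argument: starting from the quadratic relation $[V,A;\,2]=1$ and using the normality $A\norm N$ together with the fact that $V$ has exponent $p$, one tries to collapse the lower central series of $N$ acting on $V$ to length at most $p-1$. This is what happens cleanly in Theorem~\ref{main}, where the small-class hypothesis on $G$ makes the collapse automatic after $\log_2(p-2)+1$ steps, and in Theorem~\ref{baum}, where $\Baum(G)=G$ guarantees enough quadratically acting elements to drive the iteration. The case $p=3$ presumably has to be handled separately, since Theorem~\ref{2subnormal} is silent there and the small-class bound is vacuous.

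The main obstacle, and the reason the conjecture remains open in full generality, is precisely controlling $[V,N;\,p-1]$ when $G$ has arbitrary nilpotence class. Theorem~\ref{2subnormal} produces only a single $2$-subnormal quadratic offender and gives no \emph{a priori} information on the mutual action of the conjugates generating $N$; without either a genuinely normal (rather than merely $2$-subnormal) quadratic offender, or a sharper module-theoretic bound on iterated commutators arising from quadratic action of a $2$-subnormal subgroup, the induction stalls. A full resolution would have to supply one of these missing ingredients, and this is where the speculative inductive framework hinted at in Section~\ref{inductiveapproach} would need to enter.
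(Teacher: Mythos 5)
There is a fundamental problem here: the statement you are asked to prove is Oliver's \emph{conjecture}, and the paper does not prove it --- it explicitly records it as open (even after Chermak's work on linking systems) and only establishes special cases (class at most $\log_2(p-2)+1$, or $\Baum(G)=G$, for $p\geq 5$). Your proposal is candid about this and is really a strategy survey rather than a proof; by your own admission ``the induction stalls.'' So there is no complete argument to assess against a paper proof that does not exist. That said, two substantive points about the sketch itself. First, the proposed reduction --- append the preimage $N^*$ of $N=\langle A^G\rangle$ to the chain defining $\X(S)$ by verifying $[V,N;\,p-1]=1$ --- is not a genuine reduction: in a counterexample $V$ is a PS-module, $N$ is a nontrivial normal subgroup of $G$, hence $N\cap\Omega_1Z(G)\neq 1$, and any nontrivial $z$ in that intersection satisfies $[V,z;\,p-1]\neq 1$ by definition of PS-module. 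Thus $[V,N;\,p-1]=1$ is \emph{literally equivalent} to the contradiction being sought, and no ``bootstrapping from $[V,A;2]=1$'' can establish it without already resolving the conjecture; there is no collapse of the lower central series of $N$ on $V$ to extract here.

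Second, your descriptions of how Theorems~\ref{main} and \ref{baum} actually close are not accurate to the paper. Theorem~\ref{main} does not ``make the collapse automatic'': it takes the normal subgroup $W=\langle a^G\rangle$ from Lemma~\ref{normalW}, pushes a commutator $[b,x_1,\dots,x_k]$ into $\Omega_1Z(G)$, expands it as a product of $2^k$ commuting conjugates of $b^{\pm 1}$, and uses Lemma~\ref{commutingquadratic} to bound the minimum polynomial by $2^k+1$, contradicting the PS-module degree $p$ unless the class exceeds $\log_2(p-2)+1$. Theorem~\ref{baum} applies Thompson Replacement to the action of $G$ on $W$ and uses $\Baum(G)=G$ to force $W\leq\Omega_1Z(G)$, again contradicting the PS-module property. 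Neither argument attempts to control $[V,N;\,p-1]$. In short: the statement remains a conjecture, your proposal is not a proof of it, and the intermediate strategy you outline would need an entirely new idea (as you note) to have any chance of succeeding.
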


Perhaps the most effective way of viewing these Oliver subgroups is by way of the
action of the quotient group $G := S/\X_k(S)$ on $V := \Omega_1Z(\X_k(S))$.  If the
$k$-th Oliver subgroup is a proper subgroup of $S$, then for each element
$z \in S$ such that $z\X_k(S)$ is an element of order $p$ in $Z(G)$, the subgroup
$\langle z \rangle \X_k(S)$ is normal in $S$ and properly contains $\X_k(S)$. Thus
by maximality of the Oliver subgroup, 
\[ 
[V, \X_k(S)\langle z \rangle;\,k-1] \neq 1.  
\] 
Since $\X_k(S)$ and $V$ commute, this means that $[V, z;\, k-1] \neq 1$.
In other words, $z \X_k(S)$ acts on $V$ with minimum polynomial of
degree at least $k$.

This motivates the following definition.

\begin{definition} Suppose $G$ is a finite $p$-group, and $V$ is an elementary
abelian $p$-group on which $G$ acts.  Then $V$ is said to be a \emph{PS-module
of degree} $k$ if for $G$ if each $1 \neq z \in \Omega_1Z(G)$ has minimum
polynomial of degree at least $k$. We drop the degree qualifier if $k=p$,
and say $V$ is a PS-module if it is a PS-module of degree $p$.
\end{definition}

Thus, whenever $\X_k(S)$ is a proper subgroup of $S$, $V$ is a PS-module of
degree $k$ for $G$.  Conversely, if $G$ is any finite $p$-group and $V$ is a
PS-module of degree $k$ for $G$, then $\X_k(V \rtimes G) = V$.

Let us examine what happens if Oliver's conjecture fails. If $J_e(S)$ is not
contained in $\X_k(S)$, then there is an elementary abelian subgroup $A \leq S$
of maximum order not contained in $\X_k(S)$. Setting $E = A\X_k(S)/\X_k(S) \leq G$,
this means by maximality of $A$, that $|E||C_V(E)|$ has order at least $|V|$.
This leads us to the next definition and to the reformulation of Oliver's
conjecture due to Green, H\'ethelyi, and Lilienthal in \cite[Theorem \!1.2]{GHL2008}.

\begin{definition} 
Let $G$ be a finite $p$-group and $V$ an elementary abelian $p$-group on
which $G$ acts faithfully.  Then $V$ is an \emph{F-module} for $G$ if there
exists a nontrivial elementary abelian subgroup $E$ of $G$ such that
$|E||C_V(E)| \geq |V|$. In this case, we call $E$ an \emph{offender} and say
that $E$ \emph{offends} on $V$.  
\end{definition}

\begin{thm}
Let $p$ be an odd prime. Oliver's conjecture holds if and only if for every
finite $p$-group $G$, no PS-module for $G$ is an F-module.  
\end{thm}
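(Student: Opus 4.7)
The statement is an equivalence, so the plan is to prove both implications by constructing a dual object in each direction; in fact both are essentially unwindings of the setup already developed in the preceding discussion. For the backward direction, given $G$ and a PS-module $V$ for $G$ that is also an F-module, I form the semidirect product $S = V \rtimes G$. The text has already recorded that the PS-module hypothesis gives $\X(S) = V$. Picking an elementary abelian offender $E \leq G$, the subgroup $A := E \cdot C_V(E) \leq S$ is elementary abelian (since $E$ and $C_V(E)$ commute, both have exponent $p$, and $E \cap V = 1$ in the semidirect product) of order $|E||C_V(E)| \geq |V|$, and is not contained in $V$ because $E \neq 1$. Hence the maximum order of an elementary abelian subgroup of $S$ is at least $|V|$ and some such subgroup is not contained in $V$; it follows that $J_e(S) \nleq \X(S)$, so Oliver's conjecture fails for $S$.

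For the forward direction, I start from a counterexample $S$ with $J_e(S) \nleq \X(S)$ and set $G := S/\X(S)$ and $V := \Omega_1Z(\X(S))$. The text has already established that $V$ is a PS-module for $G$, so I only need to produce an offender. Pick an elementary abelian $A \leq S$ of maximum order with $A \nleq \X(S)$, set $A_0 := A \cap \X(S)$, and let $E := A\X(S)/\X(S) \neq 1$. Since $V \leq Z(\X(S))$ centralizes $A_0$, the product $VA_0$ is elementary abelian in $S$, and maximality of $|A|$ forces $|VA_0| \leq |A|$, i.e.\ $|V|/|V \cap A_0| \leq |E|$. A parallel application of maximality to $C_V(A) \cdot A$ shows $C_V(A) \leq V \cap A = V \cap A_0$, with the reverse inclusion automatic. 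Finally $C_V(E) = C_V(A)$ because $\X(S)$ acts trivially on $V$, so the $A$-action on $V$ descends to $E$. Combining these gives $|V| \leq |E| \cdot |C_V(E)|$, so $E$ is an offender.

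I do not expect a serious obstacle: the PS- and F-module conditions have been designed to match the setup of Oliver's conjecture, and both directions reduce to short bookkeeping arguments about elementary abelians. The single subtlety worth naming is the chain of identifications $C_V(E) = C_V(A) = V \cap A = V \cap A_0$ in the forward direction, which is what converts the maximality inequality for $|A|$ into the offender inequality.
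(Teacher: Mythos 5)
Your proof is correct and follows essentially the same route as the paper, which only sketches this equivalence (citing Green--H\'ethelyi--Lilienthal) via exactly the two constructions you carry out: $S = V \rtimes G$ with $\X(S) = V$ in one direction, and the offender $E = A\X(S)/\X(S)$ extracted from a maximal elementary abelian $A \nleq \X(S)$ in the other. The only point you leave implicit is that $G = S/\X(S)$ acts \emph{faithfully} on $V$ (required by the definition of an F-module); this follows from the same maximality argument that yields the PS-property, since any $z \in S$ centralizing $V$ would allow $\langle z \rangle \X(S)$ to be appended to the defining series of $\X(S)$.
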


First appearing in John Thompson's N-group paper as obstructions to
factorizations in $2$-constrained groups, the properties of F-modules are
by now well-known. In particular, as proved by Thompson in his Replacement
Theorem, if there exists an offender $E \leq G$ on $V$, then one may choose $E$
to act \emph{quadratically}, that is, satisfying $[V, E, E] = 1$ but not
centralizing $V$. In particular, the nonidentity elements of $E$ are
\emph{quadratic elements}, that is, they have quadratic minimum polynomial on
$V$. Recall that because of the identity $(x-1)^p = x^p - 1$ in characteristic
$p$, a quadratic element is of order $p$ provided $G$ is faithful on $V$.

When $3 \leq k \leq p$ and $J_e(S)$ is not contained in $\X_k(S)$,
the existence of quadratic offenders in $G = S/\X_k(S)$ on $V =
\Omega_1Z(\X_k(S))$ portends the presence of quadratic elements in $Z(G)$ and
leads the way to a possible contradiction. Indeed, many of the proofs for
special classes of $p$-groups have gone this way. So it is quite possible that
if Oliver's original conjecture holds at all, it holds in the stronger form:
$J_e(S) \leq \X_3(S)$ for all $S$.

In Sections \ref{productsubgroups} and \ref{offenders}, where the focus is
on \emph{abelian} subgroups of $S$ of maximal order, we follow Huppert and
Blackburn's treatment \cite[pp.  19--21]{HuppertBlackburn1982} of Thompson's
Replacement Theorem. Then in the proof of Theorem \ref{baum}, we use inside the
quotient group $G$ a version \cite[Theorem 25.2]{GLS2} of Thompson Replacement
stated in terms of \emph{elementary abelian} subgroups.  Here we state the
following well-known preliminary lemma to Thompson Replacement from
\cite{HuppertBlackburn1982} for use in Section \ref{offenders}.

\begin{lem}\label{hblemma}
Suppose that $S$ is a $p$-group and $A$ is an abelian subgroup of $S$. Let
$v$ be an element of $S$ for which $N = [v, A]$ is abelian, and put
$A^* = NC_A(N)$. Then 
\begin{enumerate}
\item[(a)] $A^*$ is an abelian subgroup and $|A \cap M| \leq
|A^* \cap M|$ for every normal subgroup $M$ of $S$. In particular $|A| \leq |A^*|$. 
\item[(b)] If also $1 = Z_0 \leq Z_1 \leq \cdots \leq Z_n = S$ is a central series of $S$
such that $|A \cap Z_i| = |A^* \cap Z_i|$ for all $i = 1, \dots, n$, then $A^* =
A$.
\end{enumerate}
\end{lem}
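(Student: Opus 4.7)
The plan is to follow the classical Thompson-replacement approach, as in the treatment of Huppert and Blackburn. For part (a), I first verify that $A^*$ is an abelian subgroup. The commutator identity $[v, a_1 a_2] = [v, a_2][v, a_1]^{a_2}$, combined with the abelianness of $A$, rearranges to give $[v, a_1]^{a_2} = [v, a_2]^{-1}[v, a_1 a_2] \in N$, so $A$ normalizes $N$. Hence $K := C_A(N)$ is normal in $A$, and $A^* = NK$ is a subgroup. It is abelian because $N$ is abelian by hypothesis, $K \leq A$ is abelian, and $K$ centralizes $N$ by definition. Note also that since $N$ is abelian, any element of $A \cap N$ lies in $C_A(N) = K$, so $A \cap N = N \cap K$ and $|A^*/K| = |N/(N \cap A)|$.

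The heart of (a) is the inequality $|A \cap M| \leq |A^* \cap M|$. The strategy is a replacement argument: each element of $A$ outside $A^*$ is ``swapped'' for a commutator in $N \leq A^*$, and the normality of $M$ in $S$ ensures that the swap preserves membership in $M$ (since $[v, A \cap M] \leq N \cap M$). Concretely, I would use the set map $\phi \colon A \to N$, $a \mapsto [v, a]$, which restricts to a homomorphism on $K$ (as $K$ centralizes the abelian $N$) and whose fibers are cosets of $C_A(v) \leq K$. A careful index comparison via $\phi$ bounds $|A/K|$ by $|N/(N \cap A)| = |A^*/K|$; the same comparison, restricted to elements lying in $M$, yields the desired bound.

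For part (b), I would argue by induction on $i$ that $A \cap Z_i = A^* \cap Z_i$. The hypothesis forces the bound of (a), applied with $M = Z_i$, to be an equality at every level; tracing through the replacement argument, any proper swap would produce a strict inequality, contradicting equality of orders. Taking $i = n$ yields $A = A^*$ since $Z_n = S$. The main obstacle throughout is the replacement step in (a): the map $\phi$ is only a set map, not a homomorphism, and carefully bookkeeping its deviation from multiplicativity while tracking intersections with the $S$-normal subgroup $M$ is the technical core of the proof.
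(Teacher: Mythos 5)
The paper does not actually prove this lemma: it is quoted as a well-known preliminary to Thompson Replacement and cited to Huppert--Blackburn, pp.~19--21, so the only comparison available is with the standard argument. Your setup matches that argument and is correct as far as it goes: $A$ normalizes $N$ via $[v,a]^b=[v,b]^{-1}[v,ab]$, so $A^*=NK$ with $K=C_A(N)$ is an abelian subgroup, $N\cap A\le K$, and $|A^*/K|=|N/(N\cap A)|$. The gap is in the step you yourself label the ``technical core.'' The two facts you record about $\phi(a)=[v,a]$ (that $\phi|_K$ is a homomorphism and that the fibers of $\phi$ are cosets of $C_A(v)\le K$) give only $|A:C_A(v)|=|\phi(A)|\le |N|$, hence $|A|\le |K|\,|N|=|A^*|\,|N\cap A|$, which is weaker than what you need by the factor $|N\cap A|$. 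The missing claim that makes the count work is: \emph{if $c\in A$ and $[v,c]\in N\cap A$, then $c\in K$}. (Indeed, for $d\in A$ one has $[v,d]^c=[v,c]^{-1}[v,dc]=[v,c]^{-1}[v,cd]=[v,c]^{-1}[v,d][v,c]^d=[v,c]^{-1}[v,d][v,c]=[v,d]$, using that $A$ and $N$ are abelian and $[v,c]\in A\cap N$.) With this claim, the fibers of the composite map $A\cap M\to (N\cap M)/(N\cap A\cap M)$ lie in single cosets of $K\cap M$, which gives exactly the inequality $|A\cap M:K\cap M|\le |N\cap M:N\cap A\cap M|$, and (a) follows by comparing $|A\cap M|$ with $|(N\cap M)(K\cap M)|\le|A^*\cap M|$. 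You should state and prove this claim; without it the ``careful index comparison'' is not available.

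Part (b) has a more serious gap: your sketch never uses the hypothesis that the $Z_i$ form a \emph{central} series rather than merely a chain of normal subgroups, and ``any proper swap would produce a strict inequality'' asserts the conclusion rather than arguing for it. The place where centrality enters is this. Observe that $A^*=A$ is equivalent to $N\le A$. If $N\nleq A$, choose $i$ minimal with $N\cap Z_i\nleq A$. For $a\in A\cap Z_i$ one has $[v,a]\in [S,Z_i]\le Z_{i-1}$, so $[v,a]\in N\cap Z_{i-1}\le N\cap A$ by minimality of $i$; the claim above then forces $A\cap Z_i\le K$, i.e.\ $A\cap Z_i=K\cap Z_i$. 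Since $N\cap Z_i\nleq K$, the subgroup $(N\cap Z_i)(K\cap Z_i)\le A^*\cap Z_i$ is strictly larger than $K\cap Z_i=A\cap Z_i$, giving $|A\cap Z_i|<|A^*\cap Z_i|$ and contradicting the hypothesis. Your plan of induction on $i$ can be made to work, but only once the inclusion $[Z_i,S]\le Z_{i-1}$ is brought into play; as written the proposal does not identify this mechanism.
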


\section{Product subgroups}\label{productsubgroups}
Let $V$ be an elementary abelian $p$-group. Suppose $G$ is a $p$-group which
acts faithfully on $V$, and set $S = V \rtimes G$. Let $\pi: S \to G$ denote
the canonical projection.

In this section we make a definition and collect a couple of elementary lemmas
needed for carrying out a modification of Glauberman's generalization of
Thompson Replacement in the next section.

\begin{definition}
Let $B$ be a subgroup of $S$. Define $B_\times$ to be the subgroup $(B\cap
V)\pi(B)$ of $S$. We say $B$ is a \emph{product subgroup} if $B = B_\times$.
\end{definition}

The following Lemma is clear from the definitions, but is included here for
reference.  
\begin{lem}\label{prodbasic}
    Let $B$ be a subgroup of $S$. Then
    \begin{enumerate}
    \item[(a)] $(B_\times)_\times = B_{\times}$.
    \item[(b)] $|B| = |B_\times|$.
    \item[(c)] $B = B_\times$ if and only if $\pi(B) \leq B$.
    \item[(d)] If $B \leq V$ or $V \leq B$, then $B = B_\times$.
    \item[(e)] If $B$ is abelian, then $B_\times$ is abelian.
    \end{enumerate}
\end{lem}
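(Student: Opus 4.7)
The plan is to work throughout from the semidirect product decomposition $S = V \rtimes G$, exploiting that each $s \in S$ has a unique expression $s = v \cdot \pi(s)$ with $v \in V$. This immediately yields $(B \cap V) \cap \pi(B) = 1$ and, via the first isomorphism theorem applied to $\pi|_B$, the cardinality identity $|B| = |B \cap V|\cdot|\pi(B)|$. These two facts drive the entire lemma.

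Parts (a)--(d) are direct. For (a), I would separately identify $V \cap B_\times = B \cap V$ (elements of $V$ lying in the product must have trivial $G$-component) and $\pi(B_\times) = \pi(B)$ (since $\pi$ kills $V$ and is the identity on the $G$-factor), and then simply unwind the definition. For (b), both $|B|$ and $|B_\times|$ factor as $|B \cap V|\cdot|\pi(B)|$. For (c), one direction is immediate from $\pi(B) \leq B_\times$; the other combines the obvious containment $B_\times \leq B$ (granted $\pi(B) \leq B$) with (b) to promote it to equality. For (d), the case $B \leq V$ is trivial since $\pi(B) = 1$, while $V \leq B$ reduces to (c) by writing $b = v \cdot \pi(b)$ and observing $v \in V \leq B$ forces $\pi(b) \in B$.

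Part (e) is the only place requiring a little care, and is where I expect whatever resistance exists. Given $v \in B \cap V$ and any $b \in B$, I would write $b = v' \cdot \pi(b)$ and compare the $V$- and $G$-components of the equation $bv = vb$; uniqueness of the decomposition forces $\pi(b)$ to centralize $v$. Hence $\pi(B)$ centralizes $B \cap V$, and since both factors are abelian on their own, the product $B_\times = (B \cap V)\pi(B)$ is abelian. I do not anticipate any genuine obstacle; the lemma is essentially bookkeeping inside the semidirect product, with (e) the single step where the unique decomposition is used most delicately.
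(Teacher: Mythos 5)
Your argument is correct, and it is exactly the routine verification the paper has in mind: the paper states this lemma without proof, declaring it ``clear from the definitions.'' Your handling of (e) via the unique decomposition $b = v'\pi(b)$ to show $\pi(B)$ centralizes $B \cap V$ is the one point that genuinely needs checking, and you do it properly.
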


\begin{lem}\label{prodintersect}
Let $A$ and $M$ be subgroups of $S$ and suppose $M \leq V$ or $V \leq M$. Then
$(A \cap M)_\times = A_\times \cap M$.
\end{lem}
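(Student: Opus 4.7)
The plan is to dispatch the two alternatives in the hypothesis separately. In the case $M \leq V$, the inclusion $A \cap M \leq V$ lets me invoke Lemma \ref{prodbasic}(d) to conclude $(A \cap M)_\times = A \cap M$ immediately. For the other side, since $A_\times = (A \cap V)\pi(A)$ and $V \cap G = 1$ in the semidirect product $S = V \rtimes G$, any element $vg$ with $v \in A \cap V$ and $g \in \pi(A) \leq G$ can lie in $V \supseteq M$ only when $g = 1$. Thus $A_\times \cap M = (A \cap V) \cap M = A \cap M$.

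For the case $V \leq M$, the key auxiliary fact is the equality $\pi(A \cap M) = \pi(A) \cap \pi(M)$. The nontrivial containment uses $\ker\pi = V \leq M$: given $g = \pi(a) = \pi(m)$ with $a \in A$ and $m \in M$, the element $am^{-1}$ lies in $V \leq M$, so $a \in A \cap M$ and $g \in \pi(A\cap M)$. Combined with the identity $(A\cap M)\cap V = A\cap V$ (immediate from $V \leq M$), this yields
\[
(A\cap M)_\times = (A\cap V)\bigl(\pi(A)\cap\pi(M)\bigr).
\]

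For $A_\times \cap M$, I would take a general element $vg$ with $v \in A \cap V$ and $g \in \pi(A) \leq G$; since $v \in V \leq M$, membership $vg \in M$ is equivalent to $g \in M$, and as $g \in G$ with $V \leq M$, this is in turn equivalent to $g \in \pi(M)$. Hence $A_\times \cap M = (A\cap V)\bigl(\pi(A) \cap \pi(M)\bigr)$ as well, and both sides agree.

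The argument is essentially bookkeeping with the semidirect product structure, so no serious obstacle arises. The only step that is not purely formal is the auxiliary equality $\pi(A\cap M) = \pi(A)\cap\pi(M)$, which fails for arbitrary surjective projections but holds here precisely because $V = \ker\pi$ lies in $M$; this is the only place the second hypothesis is used.
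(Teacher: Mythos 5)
Your proof is correct and follows essentially the same route as the paper's: the case $M \leq V$ is the same direct computation, and in the case $V \leq M$ your key step $\pi(A)\cap\pi(M) \leq \pi(A\cap M)$ via $am^{-1} \in \ker\pi = V \leq M$ is exactly the paper's argument (there phrased as: for $h \in \pi(A)$ there is $v \in V \leq M$ with $vh \in A$, whence $vh \in M$). The only difference is organizational --- the paper establishes the easy inclusion $(A\cap M)_\times \leq A_\times \cap M$ once for both cases and only chases the reverse inclusion, while you compute both sides explicitly --- but the mathematical content is identical.
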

\begin{proof}
Since $\pi(A \cap M) \leq \pi(A) \cap \pi(M)$, we always have $(A \cap
M)_\times \leq A_\times \cap M_\times$, and by (d) of Lemma \ref{prodbasic}, the
right-hand side is $A_\times \cap M$, so it suffices to show the reverse
inclusion. 

If $M \leq V$ then $A_\times \cap M = A_\times \cap V \cap M = A \cap V \cap M
= (A \cap M)_\times$, so we may assume $V \leq M$. 

Let $s \in A_\times \cap M$, and write $s = uh$ with $u \in A \cap M
\cap V$ and $h \in \pi(A) \cap \pi(M) \leq M$, the inclusion by (c) of Lemma
\ref{prodbasic}.  For $s$ to be in $(A \cap M)_\times$, it is enough to show
that $h \in \pi(A \cap M)$. But as $h \in \pi(A)$, there exists $v \in V \leq
M$ with $vh \in A$. So $vh \in M$ as well.  Therefore $h \in \pi(A \cap M)$,
completing the proof.
\end{proof}


\section{Offenders}\label{offenders}

In this section, we shall investigate properties of offending subgroups of $G$,
using ideas \cite{Glauberman:priv} and results \cite{Glauberman1997} of
Glauberman, and work toward the proof of Theorem \ref{2subnormal}.

Given an elementary abelian $p$-group $V$ and a $p$-group
$G$ acting faithfully on $V$, we form the semidirect product $S = V \rtimes G$,
as before. 

Denote by $\A(S)$ the set of abelian subgroups of $S$ of maximum order, and
set
\[
\A_\times(S) = \set{A \in \A(S) \mid A = A_\times \mbox{ and } A \nleq V}.
\]
\begin{definition}
Let 
\[ 
\S\!:\, 1 = Z_0 \leq Z_1 \leq \cdots \leq Z_n = S 
\] 
be a central series of $S$.  For abelian subgroups $A$ and $B$ of $S$, we say
that $A \leq_\S B$ if 
\[ 
|A| = |B| \qquad \text{and}\qquad
|A \cap Z_i| \leq |B \cap Z_i| \quad \text{for all} \quad 1 \leq i \leq n.  
\] 
If $A \leq_\S B$ and furthermore $|A \cap Z_i| < |B \cap Z_i|$ for some
$i$, we say that $A <_\S B$. Let $\mathcal{C}$ be a collection of abelian
subgroups of $S$. We say that $A$ is \emph{maximal in} $\mathcal{C}$ \emph{
with respect to} $\S$ if $A \in \mathcal{C}$ and there does not exist $B \in
\mathcal{C}$ with $A <_\S B$. 
\end{definition}

As a consequence of Thompson Replacement, there exists $A \in \A(S)$ such that
for any abelian subgroup $B$ of $S$ which is normalized by $A$, we have $A$ in
turn is normalized by $B$. The Corollary to \cite[Theorem 1]{Glauberman1997}
extends this statement for odd primes to include $B$ of nilpotence class at
most $2$ by choosing $A$ to be maximal in $\A(S)$ with respect to any fixed
central series of $S$.  Furthermore, for $p \geq 5$, Glauberman obtains with
the same choice of $A$ that (\textasteriskcentered) $A$ is normalized by any
subgroup $B$ of $S$ (not necessarily normalized by $A$) for which $A \norm
\langle A^B \rangle$ and $[A, u; 3] = 1$ for every $u \in B$.

To get a statement modulo $V$ about offenders, we must take $A$ not contained
in $V$, and make sure the replacement still lies outside $V$.  To do this, we
modify Glauberman's proof of his Theorem~1 in a couple of ways to obtain a
version of this last result (\textasteriskcentered) for $p \geq 5$.  Firstly,
because of difficulties retaining the commutator condition $[A, u; 3] = 1$
after mapping $u$ into $G$, we restrict the discussion and replacement
process only to elements of $\A_\times(S)$ and require that $B$ be a product
subgroup of $S$.  Secondly, we require that $A$ is maximal with respect to a
central series which contains our distinguished subgroup $V$ as a member.
Since we are just concerned with what happens modulo $V$, this is harmless and
facilitates verification that our replacement subgroup $A^*$ truly is greater
than $A$ in the above ordering.

We begin by showing that $\A_{\times}(S)$ is nonempty and proving a product
subgroup version of a consequence of Thompson Replacement.

\begin{lem}\label{Atimes}
The following hold.
\begin{enumerate}
\item[(a)] If $V$ is an F-module for $G$, then $\A_\times(S)$ is not empty. 
\item[(b)] Let $\S$ be a central series of $S$. Suppose $A$ is maximal in
$\A_\times(S)$ with respect to $\S$.
Then $V$ normalizes $A$, and so $[V, A, A] = 1$.
\end{enumerate}
\end{lem}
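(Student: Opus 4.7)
For part (a), the plan is to exploit the offender provided by the F-module hypothesis. Let $E \leq G$ be elementary abelian with $|E||C_V(E)| \geq |V|$, and form $B := C_V(E) E \leq S$. This $B$ is abelian (its two factors commute), is a product subgroup since $\pi(B) = E \leq B$, has order at least $|V|$, and is not contained in $V$ because $E \neq 1$. If $|B|$ equals the maximum order of an abelian subgroup of $S$, then $B \in \A_\times(S)$ and we are done. Otherwise any $A \in \A(S)$ satisfies $|A| > |V|$, hence $A \nleq V$; by parts (b) and (e) of Lemma~\ref{prodbasic} the subgroup $A_\times$ is abelian of the same maximal order, and since $\pi(A_\times) = \pi(A) \neq 1$ we have $A_\times \nleq V$, so $A_\times \in \A_\times(S)$.

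For (b), I will argue by contradiction. Suppose $V \nleq N_S(A)$. The subgroup $V \cap N_S(A)$ is $\pi(A)$-invariant (conjugation by $h \in \pi(A) \leq A$ preserves the property of normalizing $A$), so $\pi(A)$ acts on the nontrivial $p$-group $V/(V \cap N_S(A))$. By the standard $p$-group fixed-point principle its invariants have cardinality at least $p$, so one can choose $v \in V \setminus N_S(A)$ whose image in the quotient is fixed, i.e., such that $N := [v, A] = [v, \pi(A)] \leq V \cap N_S(A)$. Applying Lemma~\ref{hblemma} with this $v$ produces $A^* = N C_A(N)$, abelian with $|A^*| \geq |A|$, hence in $\A(S)$. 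Writing elements of $A$ as $uh$ with $u \in A \cap V$, $h \in \pi(A)$, the identity $[uh, n] = [h, n]$ for $n \in N \leq V$ yields $C_A(N) = (A \cap V)\, C_{\pi(A)}(N)$, so $\pi(A^*) = C_{\pi(A)}(N) \leq A^*$ and Lemma~\ref{prodbasic}(c) shows $A^*$ is a product subgroup. Since $A^*$ contains $N \nleq A$ we have $A^* \neq A$, so Lemma~\ref{hblemma}(b) forces $A <_\S A^*$.

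If $A^* \nleq V$, then $A^* \in \A_\times(S)$, contradicting the maximality of $A$. The main obstacle is the remaining case $A^* \leq V$, which I expect to be the only delicate point of the proof. There $\pi(A^*) = C_{\pi(A)}(N) = 1$ and $A^* = N(A \cap V)$, and the bounds $|A^*| \leq |V|$, $|A| \leq |A^*|$, together with $|V| \leq |A|$ (as $V$ is abelian and $|A|$ is the maximum abelian order), force $A^* = V = N(A \cap V)$. Now I exploit the careful choice of $v$: writing $v = na$ with $n \in N$ and $a \in A \cap V$, and using that $V$ is abelian and $[a, h] = 1$ for $h \in \pi(A) \leq A$, one computes $[v, h] = [n, h] \in [N, A] \leq A \cap V$, where the last inclusion is because $N \leq N_S(A)$. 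Hence $v \in N_S(A)$, contradicting the choice of $v$. Finally, $[V, A, A] = 1$ is automatic from $V \leq N_S(A)$: then $[V, A] \leq V \cap A \leq A$, and $[V \cap A, A] = 1$ because $A$ is abelian.
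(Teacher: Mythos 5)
Your proof is correct and follows essentially the same route as the paper: part (a) via $A_\times$ and $C_V(E)E$, and part (b) via the replacement $A^* = NC_A(N)$ for a suitable $v \in V \setminus N_V(A)$ with $N = [v,A] \leq N_V(A)$, together with the same check that $A^*$ is again a product subgroup. The only divergence is your handling of the case $A^* \leq V$: the paper disposes of it at once by noting $A^* \cap V = N(A\cap V)$ is contained in $N_V(A)$, a proper subgroup of $V$, while $|A^*| \geq |V|$; your longer computation showing that $v$ would then normalize $A$ is correct but unnecessary.
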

\begin{proof}
Suppose $V$ is not an element of $\A(S)$. If $A \in \A(S)$, then in this case
$|A_\times| = |A| > |V|$, and $A_\times$ is a product subgroup by Lemma
\ref{prodbasic}. Thus, $A_\times \in \A_\times(S)$.

Suppose that $V \in \A(S)$. By assumption $S = V \rtimes G$ and $V$ is an
F-module for $G$. Choose a nontrivial elementary abelian subgroup $E \leq G$
such that $|E||C_V(E)| \geq |V|$. Then $A := C_V(E)E \in \A(S)$ by maximality
of $|V|$, and $A$ is a product subgroup. Since $E \neq 1$, $A \in
\A_\times(S)$, proving (a).

Suppose $A$ is maximal in $\A_\times(S)$ with respect to $\S$. Let $E = A \cap
G$, and note that $E \neq 1$ by definition. Set $D = A \cap V = C_V(A)$. 

Suppose $V$ does not normalize $A$ and let $M = N_V(A)$, which is a proper
subgroup of $V$. Then $M$ and $V$ are normal subgroups of $VA$.  Thus, $1 \neq
V/M \norm VA/M$ and so $V/M \cap Z(VA/M) \neq 1$.  Let $v \in V - M$ such that
$v$ maps into $Z(VA/M)$ in the quotient $VA/M$. Then $N := [v, A] \leq M$ is
abelian and normalizes $A$.  Furthermore, $N$ is not contained in $D = C_V(A) <
A$ since $v$ does not normalize $A$.

Thus $D < ND \leq M < V$.  Set $A^* = NC_A(N)$. Since $[D, N] = 1$ and $A = D
\times E$, we get $A^* = NDC_E(N)$ so that $A^*$ is a product subgroup.  Then
as $A^* \cap V = ND > D = A \cap V$, we have that $A^* \neq A$.  By Lemma
\ref{hblemma}(a), $A^*$ is abelian and $|A^*| \geq |A|$.  First, this means
that $A^* \in \A(S)$. Second we get $A^* \cap G \neq 1$ and therefore $A^* \in
\A_{\times}(S)$, since $A^* \cap V = ND$ is proper in $V$ and $|V| \leq |A| =
|A^*|$.  Also, $A \leq_\S A^*$ by Lemma \ref{hblemma}(a).  But $|A \cap Z_i| <
|A^* \cap Z_i|$ for some $i$ by Lemma \ref{hblemma}(b).  This contradicts the
maximality of $A$, and shows that $V$ does indeed normalize $A$. Thus we also
have $[V, A, A] \leq [A, A] = 1$. 

\end{proof}

We now assume for the remainder of this section that $p \geq 5$.

\begin{definition}
Given a subgroup $A$ of $S$, let $\mathcal{B}_A$ be the set of all
$B \leq S$ such that $B = B_\times$, $A \norm \langle A^B \rangle$, and
$[A, u;\,3] = 1$ for all $u \in B$. 
\end{definition}

\begin{thm}\label{glaubcor}
Suppose $\S_V$ is a central series of $S$ which passes through $V$ and $A$ is
maximal in $\A_\times(S)$ with respect to $\S_V$.  Then $A$ is normalized by
every member of $\mathcal{B}_A$.
\end{thm}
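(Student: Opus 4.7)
The plan is a proof by contradiction adapted from Glauberman's proof of Theorem~1 in \cite{Glauberman1997}, carrying through the product-subgroup structure and exploiting that $\S_V$ passes through $V$. Suppose some $B \in \mathcal{B}_A$ does not normalize $A$. The goal is to construct an element $v \in S$ with $N := [v, A]$ abelian and $v \notin N_S(A)$, and then to show that the Huppert--Blackburn replacement $A^* := N C_A(N)$ lies in $\A_\times(S)$ and satisfies $A <_{\S_V} A^*$, contradicting the maximality of $A$.

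The construction of $v$ is the heart of the argument and the step I expect to be the main obstacle. Exactly as in Glauberman, one exploits the $2$-subnormality $A \norm \langle A^B \rangle$ together with the cubic commutator condition $[A, u;\, 3] = 1$ for each $u \in B$: since $p \geq 5 > 3$, Hall--Petrescu style collection is available in the relevant class-$3$ quotient, and Glauberman's Lie-theoretic manipulations produce $v$ as a commutator built from elements of $A$ and $B$. Because $A$ and $B$ are product subgroups, $\langle A, B \rangle$ is also a product subgroup (its image under $\pi$ is generated by $\pi(A) \leq A$ and $\pi(B) \leq B$), and $v$ can be taken to lie in $\langle A, B \rangle$. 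Once $v$ is in hand, Lemma \ref{hblemma}(a) gives that $A^*$ is abelian with $|A^*| \geq |A|$ and $A \leq_{\S_V} A^*$; since $A \in \A(S)$ has maximum order, $|A^*| = |A|$, so $A^* \in \A(S)$. The condition $v \notin N_S(A)$ forces $N \nleq A$, hence $A^* \neq A$; Lemma \ref{hblemma}(b) then upgrades $A \leq_{\S_V} A^*$ to $A <_{\S_V} A^*$.

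It remains to verify that $A^* \in \A_\times(S)$. Writing $A = D \times E$ with $D = A \cap V$ and $E$ a lift of $\pi(A)$ (as in the proof of Lemma \ref{Atimes}(b)), the commutator $N$ decomposes compatibly with $S = V \rtimes G$ into its $V$-part $N \cap V$ and its image $\pi(N) \leq G$, and the abelianness of $A$ yields $C_A(N) = C_D(N) \times C_E(N)$; hence $A^* = N C_A(N)$ is a product subgroup. For $A^* \nleq V$: since $\S_V$ passes through $V$ at some level $Z_j = V$, the inequality $|A \cap V| = |A \cap Z_j| \leq |A^* \cap Z_j| = |A^* \cap V|$ together with $|A^*| = |A|$ bounds $|\pi(A^*)| \leq |\pi(A)|$, and the construction of $v$ in $\langle A, B \rangle$ with $B \nleq V$ can be arranged so that $\pi(A^*) \neq 1$. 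Thus $A^* \in \A_\times(S)$ with $A <_{\S_V} A^*$, contradicting the maximality of $A$ and completing the proof.
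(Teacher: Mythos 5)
Your overall strategy---contradict the maximality of $A$ by producing a member of $\A_\times(S)$ strictly above $A$ in the $\leq_{\S_V}$ ordering---matches the paper, but the mechanism you propose for producing the replacement does not work, and the two points you defer to ``can be arranged'' are exactly the nontrivial content of the paper's argument. The central gap is your claim that Glauberman's manipulations ``produce $v$ as a commutator built from elements of $A$ and $B$'' with $N=[v,A]$ abelian, so that the Huppert--Blackburn replacement $A^*=NC_A(N)$ of Lemma~\ref{hblemma} applies. Under the hypotheses $A \norm \langle A^B\rangle$ and $[A,u;\,3]=1$ there is in general no such single element: the commutators $[v,A]$ land in $\langle A^{T}\rangle$ (with $T=\langle A,b\rangle$), a group of class up to $3$, and there is no reason for $[v,A]$ to be abelian. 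Glauberman's replacement in this setting is genuinely different. After reducing to a single $b\in B\cap G$ not normalizing $A$ (possible because $V$ normalizes $A$ by Lemma~\ref{Atimes}(b) and $B$ is a product subgroup), one forms $\hat{A}=\langle A, A^b, A^{b^2}\rangle$, which has class at most $3$, passes to a Lie ring via the Lazard correspondence (this is where $p\geq 5$ enters), and sets $A^*=A_1+\delta^2(A)$, where $\delta=\log(\alpha)$ for $\alpha$ conjugation by $b$. This $A^*$ is not of the form $NC_A(N)$; Lemma~\ref{hblemma} plays no role at this stage, and the strict comparison $A<_{\S}A^*$ comes from Glauberman's Lie-ring theorem, not from Lemma~\ref{hblemma}(b).

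The two verifications you postpone are also not routine. First, the paper does not show that $A^*$ is a product subgroup (your identity $C_A(N)=C_D(N)\times C_E(N)$ fails in general when $N\nleq V$); instead it replaces $A^*$ by $(A^*)_\times$, which has the same order, and uses Lemma~\ref{prodintersect} together with the hypothesis that $\S_V$ passes through $V$ (so every term $M$ of the series satisfies $M\leq V$ or $V\leq M$) to conclude $A^*\leq_{\S_V}(A^*)_\times$ and hence $A<_{\S_V}(A^*)_\times$. This is the precise reason the theorem requires a central series through $V$. Second, your assertion that the construction ``can be arranged so that $\pi(A^*)\neq 1$'' is exactly Lemma~\ref{centdelta}: one must run a case analysis on whether $\delta(a)$ and $\delta^2(a)$ vanish for $a\in A\cap G$, invoking parts (b) and (c) of Glauberman's Theorem~3.2, to exhibit a nonzero element of $A^*\cap(D+\delta(D)+\delta^2(D))$ with $D=A\cap G$. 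Your counting argument only bounds $|\pi(A^*)|$ from above and gives no lower bound, so as written the possibility $A^*\leq V$ is not excluded.
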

\begin{proof}
Suppose not, and choose $B \in \mathcal{B}_A$ such that $B$ does not normalize
$A$. By Lemma \ref{Atimes}(b), $V$ normalizes $A$. As $B = B_\times = (B \cap
V)(B \cap G)$, there exists $b \in B \cap G$ which does not normalize $A$. Set
$T = \langle A, b \rangle$ and $\hat{A} = \langle A^T \rangle$.
Under this setup, Glauberman shows \cite[p.320]{Glauberman1997} (see the last
paragraph of Step 1 there) that
\[
\hat{A} = \langle\, A, A^b, A^{b^2} \,\rangle \quad \text{and} \quad \hat{A} 
\,\,\,\text{has class at most}\,\, 3.
\]
Since $p \geq 5$ and the class of $\hat{A}$ is at most 3, we view $\hat{A}$
as a Lie ring by the Lazard corespondence \cite[Theorem
\!3.4]{Glauberman1997}. Let $\alpha$ be the map on $\hat{A}$ which is
conjugation by $b$, and let $\delta = \log(\alpha)$. By
\cite[pp. 320--323]{Glauberman1997}, we have that $\delta^3 = 0$, 
\[
\delta = (\alpha - 1) - \frac{1}{2}(\alpha - 1)^2\,\,\,\text{is a derivation
of}\,\,\,\hat{A}, 
\]
\[
\hat{A} = A + \delta(A) + \delta^2(A), \mbox{ and }
\]
\[
A \geq Z(\hat{A}).
\]
Set
\[
A_1 = \set{a_1-2\delta(a_2) \mid a_1, a_2 \in A \mbox{ and }
\delta(a_1)-\delta^2(a_2) \in Z(\hat{A})},
\]
and
\[
A^* = A_1 + \delta^2(A).
\]
Then Glauberman shows by way of a Lie ring theoretic result 
\cite[Theorem \!3.2]{Glauberman1997} that as groups, $A^*$ is abelian, $|A| =
|A^*|$, and $A <_{\S} A^*$ for every central series $\S$ of $S$. Set
$A_{\times}^* = (A^*)_\times$. Then by Lemma \ref{prodbasic}(b),
$|A_{\times}^*| = |A^*| = |A|$, and by Lemma \ref{centdelta} below, $A^* \nleq
V$ so $A_{\times}^* \nleq V$. Thus, $A_{\times}^* \in \A_\times(S)$. Since
every member $M$ of $\S_V$ satisfies $M \leq V$ or $V \leq M$, we have by Lemma
\ref{prodintersect} that $A^* \leq_{\S_V} A_{\times}^*$. Therefore, $A <_{\S_V}
A_{\times}^*$. This contradicts the assumption that $A$ is maximal in
$A_\times(S)$ with respect to $\S_V$ and completes the proof.
\end{proof}

\begin{lem}\label{centdelta}
Let $A$, $\delta$, $\hat{A}$ and $A^*$ be as in the proof of \textup{Theorem
\ref{glaubcor}}, with $\hat{A}$ and its subgroups viewed as Lie rings.  Let $D =
A \cap G$, and set $\hat{D} = D + \delta(D) + \delta^2(D)$. Then $A^* \cap
\hat{D} \neq \set{0}$, and therefore $A^* \cap G \neq 1$. 
\end{lem}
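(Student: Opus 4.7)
The plan is to produce a nonzero element of $A^* \cap \hat{D}$ by choosing the parameters $a_1, a_2, a_3$ in the definition of $A^*$ all to lie inside $D$, so that membership in $\hat{D} = D + \delta(D) + \delta^2(D)$ is automatic. This works because $\hat{D}$ is closed under the Lie operations used to form elements of $A^*$: since $\alpha$ is conjugation by $b \in G$, it preserves the Lie subring $G \cap \hat{A}$ of $\hat{A}$, and hence so does $\delta = \log \alpha$. Thus $D$, $\delta(D)$, and $\delta^2(D)$ are all additive subgroups of $G \cap \hat{A}$, and any expression of the form $a_1 - 2\delta(a_2) + \delta^2(a_3)$ with $a_i \in D$ lies in $\hat{D} \subseteq G$.

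Since $A \in \A_\times(S)$, the subgroup $D = A \cap G$ is nontrivial; pick any nonzero $d \in D$. Using $\delta^3 = 0$, let $i \in \set{0,1,2}$ be maximal with $\delta^i(d) \neq 0$. For $i = 2$ take $(a_1, a_2, a_3) = (0, 0, d)$, giving the element $\delta^2(d)$; for $i = 1$ take $(0, d, 0)$, giving $-2\delta(d)$, which is nonzero since $p \neq 2$; for $i = 0$ take $(d, 0, 0)$, giving $d$. By the maximality of $i$, in every case the centrality constraint $\delta(a_1) - \delta^2(a_2)$ evaluates to $0$ and so lies in $Z(\hat{A})$, so the chosen element is a legitimate member of $A_1 + \delta^2(A) = A^*$. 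In each case we obtain a nonzero element of $A^* \cap \hat{D}$, and then $A^* \cap G \supseteq A^* \cap \hat{D} \neq 0$ yields the final assertion.

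There is no real obstacle beyond writing down the correct element; the argument reduces to the observation that the three natural choices involving a single nonzero $d$ collectively exhaust the possibilities allowed by $\delta^3 = 0$. The only point requiring verification is that $\delta$ preserves $G \cap \hat{A}$, so that $\delta^i(d) \in \hat{D}$ for each $i$, and this in turn follows from the fact that conjugation by an element of $G$ preserves $G$.
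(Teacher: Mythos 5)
Your proof is correct and follows the same broad strategy as the paper's: take a nonzero $d \in D$, let $i \in \set{0,1,2}$ be maximal with $\delta^i(d) \neq 0$ (which exists since $\delta^3 = 0$), and exhibit the corresponding element of $A^* \cap \hat{D}$. The difference lies in how the intermediate case is handled. The paper first disposes of the case $\delta^2(D) \neq 0$ via the summand $\delta^2(A) \leq A^*$, and then, for a nonzero $a \in D$ with $\delta^2(a) = 0$ but $\delta(a) \neq 0$, splits on whether $a \in Z(\hat{A})$ and invokes Glauberman's Theorem 3.2(b) and (c) to place $a$, respectively $\delta(a)$, inside $A^*$. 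You instead observe that with $(a_1, a_2) = (0, d)$ the defining condition for membership in $A_1$ reads $-\delta^2(d) \in Z(\hat{A})$, which holds automatically by the maximality of $i$, so $-2\delta(d) \in A_1 \leq A^*$ directly, and this element is nonzero because $p$ is odd. This removes the dependence on Theorem 3.2(b),(c) and the sub-case analysis on $Z(\hat{A})$, at no cost in rigor. The remaining ingredients are identical in both arguments: the final step that $\hat{D} \leq G$ rests on $\delta$ being a polynomial in $\alpha$, which is conjugation by $b \in G$ and hence preserves $G \cap \hat{A}$ (a Lie subring under the Lazard correspondence), and the checks that $D$, $\delta(D)$, $\delta^2(D)$ are additive subgroups so that the exhibited elements really lie in $\hat{D}$.
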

\begin{proof}
Note that for the proof, we do not need knowledge that $\hat{D}$ is closed under
the Lie bracket, but only that $\hat{D}$ is a abelian group (as a subspace of
a Lie ring).

Recall that $A^* = A_1 + \delta^2(A)$ where
\[
A_1 = \setst{a_1-2\delta(a_2)}{a_1, a_2 \in A \,\, \text{and} \,\,
\delta(a_1)-\delta^2(a_2) \in Z(\hat{A})}.
\]
As $D = A \cap G$ is nontrivial, we have that $\hat{D}$ is nontrivial.
Since $\delta^2(D)$ is contained in $A^*$, the moment $\delta^2(a) \neq 0$
for some $a \in D$, we are finished. So assume that $\delta^2(D) = 0$. 

Let $a$ be a nonzero element of $D$. Suppose first that $\delta(a) = 0$.
Let $a_2 = 0$. Then $\delta(a) - \delta^2(a_2) = 0 \in Z(\hat{A})$, and
so $a = a - 2\delta(a_2) \in A_1 \leq A^*$, and $a \in \hat{D}$ as well.

Suppose $\delta(a) \neq 0$. If $a \in Z(\hat{A})$, then $\delta(a) \in
Z(\hat{A})$ as well, by \cite[Theorem \!3.2(b)]{Glauberman1997}. Taking $a_2 =
0$ again, we have $\delta(a) - \delta^2(a_2) \in Z(\hat{A})$, giving $a \in A^*
\cap \hat{D}$.  Suppose $a \nin Z(\hat{A})$. By \cite[Theorem
\!3.2(c)]{Glauberman1997}, either $\delta^2(a) \in A^* - A$ or $\delta(a) \in
A^* - A$. The former cannot hold as $\delta^2(a) = 0$, and the latter gives
$\delta(a) \in A^* \cap \hat{D}$.

This shows that $A^* \cap \hat{D} \neq \set{0}$. However,
recall that $\delta$ is a polynomial in $\alpha$, which is an automorphism
induced by conjugation by an element of $G$. It follows that $\hat{D} \leq G$,
and $A^* \cap G$ is nontrivial as claimed.
\end{proof}

The following is basically the content of \cite[Theorem \!4.1]{Glauberman1997}.
However, since we have restricted the collections from which $A$ and $B$ are
chosen in proving Theorem \ref{glaubcor}, the statement and argument have to be
modified. We present a complete proof in our case for the convenience of the
reader.
\begin{thm}\label{normeachother}
Suppose $A$ and $B$ are abelian product subgroups of $S$, both normalized by
$V$.  Assume that $A$ is normalized by every member of $\mathcal{B}_A$ and
$B$ is normalized by every member of $\mathcal{B}_B$. Then $A$ and $B$
normalize each other. 
\end{thm}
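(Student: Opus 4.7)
The plan is to show that $B \in \mathcal{B}_A$ and, by the symmetry of the hypothesis, that $A \in \mathcal{B}_B$; the assumptions on $A$ and $B$ then immediately yield that they normalize each other. Since $B = B_\times$ is given, verifying $B \in \mathcal{B}_A$ amounts to checking two conditions: first, that $A \norm \langle A^B \rangle$, and second, that $[A, u;\,3] = 1$ for every $u \in B$.

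The tools I will use throughout are the following. Because $V$ normalizes both $A$ and $B$, one has $[V, A] \leq A \cap V$ and $[V, B] \leq B \cap V$, each of which is central in the relevant abelian group. The product decomposition $B = (B \cap V)(B \cap G)$, combined with $B \cap V \leq V \leq N_S(A)$, gives $\langle A^B \rangle = \langle A^{B \cap G} \rangle$, so normality questions reduce to elements of $B \cap G$ alone. Analogous observations hold with the roles of $A$ and $B$ swapped.

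For the cubic commutator condition, I would write $u = vg$ with $v \in B \cap V$ and $g \in B \cap G$, and expand using $[a, vg] = [a, g][a, v]^g$ for $a \in A$. Since $[a, v] \in A \cap V$ is central in $A$ and is centralized by $V$, iteration of the triple commutator $[A, u;\,3]$ should telescope, modulo controllable $V$-terms, to an analysis of $[A, g;\,3]$. The latter can then be handled by passing modulo $V$, where $\pi(A)$ and $\pi(B)$ are abelian subgroups of $G$, and direct calculation or the Lazard correspondence (available for $p \geq 5$) should close the case.

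The main obstacle, which I expect to absorb the bulk of the work, is verifying $A \norm \langle A^B \rangle$, equivalently $[A, [A, B]] \leq A$. My plan is to argue by contradiction: assume this normality fails and pick $b \in B \cap G$ such that $A^b$ does not normalize $A$. I would then try to construct a product subgroup $X$ built from $A$, $A^b$, and parts of $V$ --- natural candidates being $\langle A, A^b \rangle_\times$ or a Thompson-replacement-style modification of $A$ --- which lies in $\mathcal{B}_B$ but fails to normalize $B$, contradicting the symmetric hypothesis on $B$. Setting up this bootstrap, and verifying the three defining conditions of $\mathcal{B}_B$ for the candidate $X$ using the product-subgroup machinery of Section~\ref{productsubgroups} together with the abelian-ness of $A$, will be the delicate part of the argument.
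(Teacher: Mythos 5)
Your overall target matches the paper's --- both reduce the theorem to showing $B \in \mathcal{B}_A$ and, symmetrically, $A \in \mathcal{B}_B$ --- but the one idea that actually closes the two substantive verifications is missing from your plan: induction on $|G|$. For the key normality $A \norm \langle A^B \rangle$, your proposed bootstrap is essentially circular: to contradict the hypothesis on $B$ you must certify that your candidate $X$ (built from $A$, $A^b$, and pieces of $V$) satisfies $B \norm \langle B^X \rangle$ and $[B,u;\,3]=1$ for $u \in X$, which is a problem of exactly the same type and difficulty as the one being solved, and nothing in the hypotheses gives a handle on it. The paper instead inducts on $|G|$: if $A \cap G = G$ then $G$ is abelian, $\langle A^B\rangle \leq VA$, and everything is immediate since $V$ normalizes $A$; otherwise $A \cap G$ lies in a maximal subgroup $M_0 < G$, the normal subgroup $M = VM_0$ of $S$ contains $\langle A^B\rangle$, and each conjugate $A^x = A^{\pi(x)}$ (for $x \in B$) is again an abelian product subgroup normalized by $V$ and by every member of $\mathcal{B}_{A^x}$, so the inductive hypothesis applied inside $M$ gives that $A$ and $A^x$ normalize each other, whence $A \norm \langle A^B\rangle$.

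Your plan for the cubic commutator condition also does not go through: knowing that $\pi(A)$ and $\pi(B)$ are abelian subgroups of $G$ gives no bound on $[A,g;\,3]$, since two abelian subgroups of a $p$-group can generate a subgroup of arbitrarily large class, and the Lazard correspondence is of no help without a prior class bound. In the paper this condition is not computed directly at all; it falls out of the normality just established, via $[B,A;\,3] \leq [\langle A^B\rangle, A, A] \leq [A,A]=1$ and the symmetric statement $[A,B;\,3]=1$, which contains all the $[A,u;\,3]$. So the induction does all the work, and neither half of $B \in \mathcal{B}_A$ is reachable by the route you describe.
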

\begin{proof}
We proceed by induction on the order of $G$. If $G = 1$, then $S = V$, and
the statement is trivial. Suppose that $G > 1$. 

Assume first that $A \cap G = G$. Then $G$ is abelian and so the conjugation
action of $S$ is trivial modulo $V$. Hence $\langle A^B \rangle \leq VA$, so
$A$ is normal in $\langle A^B \rangle$ because $V$ normalizes $A$. As $[S, S]
\leq V$ and $V$ normalizes $B$, we also have $[A, u, u, u] \leq [V, u, u] \leq
[B, u] = 1$ for all $u \in B$. So $B \in \mathcal{B}_A$. Because $G$ is abelian
we still have $\langle B^A \rangle \leq VB$, and the same argument applies to
give $A \in \mathcal{B}_B$.  Thus $A$ and $B$ normalize each other. By
symmetry, the same conclusion holds in case $B \cap G = G$. 

So we may assume that $A \cap G$ and $B \cap G$ are proper in $G$. Let $M_0$
be a maximal subgroup of $G$ containing $A \cap G$, and set $M = VM_0$. Then $M
\norm S$, so
\[
A \leq \langle A^B \rangle \leq \langle M^S \rangle = M.
\]
If $x \in B$, then $A^x = A^{\pi(x)}$, since $V$ normalizes $A$. Hence
$A^x$ is a product subgroup. Furthermore, it is easy to see that
$A^x$ is normalized by every member of $\mathcal{B}_{A^x} = (\mathcal{B}_A)^x$.
By the inductive hypothesis, $A^x$ normalizes $A$. 
It follows that $A \norm \langle A^B \rangle$, and
\[
[B, A;\,3] \leq [\langle A^B \rangle, A, A] \leq [A, A] = 1.
\]
By symmetry $[A, B;\,3] = 1$. This means that $B \in \mathcal{B}_A$. Hence,
$B$ normalizes $A$ by hypothesis. By symmetry, $A$ normalizes $B$.
\end{proof}

Now we are in a position to prove Theorem \ref{2subnormal}.

\begin{named}{Theorem \ref{2subnormal}}
Suppose $p \geq 5$.  Let $V$ be an elementary abelian $p$-group
which is an F-module for the $p$-group $G$.  Then there exists a quadratic
offender on $V$ in $G$ which is normal in its normal closure in $G$.  
\end{named}
\begin{proof}
Let $S = V \rtimes G$. By Lemma \ref{Atimes}(a), $\A_{\times}(S)$ is nonempty.
Let $\S$ be a central series which passes through $V$ and let $A$ be 
an element of $\A_\times(S)$ maximal in $\A_\times(S)$ with respect to $\S$.
Set $E := A \cap G \neq 1$. 
As $A \in \A(S)$, we have $|V| \leq |A|$. Since $A$ is a product
subgroup, $|V| \leq |A| = |A \cap V||A \cap G| = |C_V(E)||E|$. So $E$
offends on $V$.
By Lemma \ref{Atimes}(b), $V$ normalizes $A$, and $E$ acts quadratically on
$V$. Let $B$ be a conjugate of $A$. Then $B$ is a product subgroup because
$V$ normalizes $A$. Thus, $B$ is maximal in $\A_\times(S)$ with respect to
$\S$. By Theorem \ref{glaubcor}, $B$ is normalized by each element in
$\mathcal{B}_B$. By Theorem \ref{normeachother}, $A$ is normalized by its
$S$-conjugates. Therefore $E$ is normalized by its $G$-conjugates. This is
another way of saying that $E \norm \langle E^G \rangle$.
\end{proof}

\section{Oliver's conjecture}\label{classbound}

In this section we leverage the existence of a $2$-subnormal quadratic offender
guaranteed in Theorem~\ref{2subnormal} to prove Theorems~\ref{main} and \ref{baum}.
First we state the key lemma of Green, H\'ethelyi, and Lilienthal in 
\cite[Lemma 4.1]{GHL2008}.

\begin{lem}\label{greenlemma}
Suppose $V$ is an elementary abelian $p$-group and $G$ is a $p$-group acting
faithfully on $V$. Let $a$ be an element of $G$ acting quadratically on $V$.
Let $x \in G$ and suppose $c := [a, x] \neq 1$ centralizes both $a$ and $x$.
Then $c$ acts quadratically on $V$.
\end{lem}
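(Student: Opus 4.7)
The plan is to work in the algebra of operators acting on $V$ over $\mathbf{F}_p$ and show directly that $(c - 1)^2 = 0$ on $V$. Set $A := a - 1$ and $C := c - 1$, viewed as nilpotent operators on $V$; the quadratic hypothesis on $a$ is $A^2 = 0$ and the goal is $C^2 = 0$. Since $c$ commutes with both $a$ and $x$ in $G$, the operator $C$ commutes with $A$, with $a^{\pm 1}$, and with $x^{\pm 1}$. The only non-commutativity in play is between $A$ and $x$, and this is precisely what the relation $c = [a, x]$ controls.

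The relation $c = [a, x]$, combined with the centrality of $c$ in $\langle a, x \rangle$, yields two conjugation identities of opposite sign: as operators on $V$,
\[
x^{-1} a x = a c \qquad \text{and} \qquad x a x^{-1} = a c^{-1}.
\]
Subtracting $1$ and factoring in terms of the pairwise commuting operators $A$, $C$, $a$ gives
\[
x^{-1} A x = A + aC \qquad \text{and} \qquad x A x^{-1} = (A - C) c^{-1}.
\]
Squaring each identity and using that $x^{\pm 1}$-conjugation of $A^2$ is again zero yields $(A + aC)^2 = 0$ and $(A - C)^2 c^{-2} = 0$. Clearing the invertible factors $a$ and $c^{-2}$ and expanding using $A^2 = 0$ together with $AC = CA$ reduces these, respectively, to
\[
C^2 = -2AC \qquad \text{and} \qquad C^2 = 2AC.
\]
Adding the two relations gives $2C^2 = 0$, and since $p$ is odd we conclude $C^2 = 0$, so $c$ acts quadratically on $V$.

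The main thing to notice, and the mild obstacle in the setup, is that conjugating $a$ by $x$ and by $x^{-1}$ produces \emph{two} independent polynomial identities between $A$ and $C$ with opposite-sign $AC$ terms, even though both originate from the single equation $A^2 = 0$. Had one used only the first, one would at best conclude $(c-1)^3 = 0$ rather than the desired quadratic bound. Once this cancellation between the two identities is recognized, the verification is a brief expansion of squares of pairwise commuting operators, with the division by $2$ accounting for the odd-prime hypothesis.
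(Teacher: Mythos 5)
Your argument is correct. Note that the paper does not actually prove this lemma---it is quoted from Lemma~4.1 of Green--H\'ethelyi--Lilienthal \cite{GHL2008}---so there is no in-paper proof to compare against; what you give is a valid, self-contained operator verification. Two small points deserve an explicit line. First, reducing $(A+aC)^2=0$ to $C^2=-2AC$ is not just a matter of cancelling the invertible factor $a$: expanding gives $2aAC+a^2C^2=0$, and one needs $a^{\pm 1}A=A$ (which holds because $aA=(1+A)A=A+A^2=A$) to strip the powers of $a$ from the $AC$ term before dividing by $a^2$; your parenthetical ``using $A^2=0$'' is exactly what makes this work, but the reader should see that identity written down. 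Second, to conclude that $c$ acts \emph{quadratically} in the paper's sense (i.e.\ with minimum polynomial of degree exactly $2$, not $1$) you should add that $C\neq 0$, which is immediate from $c\neq 1$ and faithfulness. Your closing observation is also on target: a single conjugation identity only yields $C^3=0$, consistent with the fact that $c=a^{-1}a^{x}$ is a product of two commuting quadratic elements (compare Lemma~\ref{commutingquadratic}), so the cancellation between the $x$- and $x^{-1}$-identities, using that $c$ centralizes $x$ and that $p$ is odd, is genuinely where the quadratic bound comes from.
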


The previous Lemma allows one, under certain conditions, to begin with a
quadratic element and descend the central series of $G$, finding quadratics
further and further down. This process stops when at some point, and having
some quadratic element $a$, one can find no element $x \in G$ for which
$[a,x] \neq 1$ centralizes $a$, and $[a, x, x] = 1$. The proof of the next
Lemma uses the existence of a $2$-subnormal offender to produce normal abelian
subgroups of $G$ which act as a container for descent, and identifies a normal
abelian subgroup $W$ at the bottom when the descent process terminates.

\begin{lem}\label{normalW}
Suppose $p \geq 5$ and $3 \leq k \leq p$.  Suppose $S$ is a finite $p$-group
such that $J_e(S)$ is not contained in $\X_k(S)$. Set $G = S/\X_k(S)$,
which acts faithfully on $V = \Omega_1Z(\X_k(S))$. Then
there exists a normal elementary abelian subgroup $W = \langle a^G \rangle$ of
$G$, generated by the $G$-conjugates of a quadratic element, such that for each $x
\in G$, either $[W, x] = 1$ or $[W, x, x] \neq 1$. 
\end{lem}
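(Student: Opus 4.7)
The plan is to construct $W$ via a descent argument starting from the $2$-subnormal quadratic offender guaranteed by Theorem~\ref{2subnormal}. Because $J_e(S) \nleq \X_k(S)$, the reformulation recalled in Section~\ref{definitions} shows that $V$ is an F-module for $G$. Theorem~\ref{2subnormal} supplies a quadratic offender $E \leq G$ with $E \norm N$, where $N := \langle E^G \rangle$. Since $N$ is a nontrivial $p$-group and $E$ is a nontrivial normal subgroup of $N$, the intersection $E \cap Z(N)$ is nontrivial; pick an element $a$ of order $p$ in it. Being a nonidentity element of $E$, the element $a$ is quadratic on $V$. Set $W_0 := \langle a^G \rangle$. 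For each $g \in G$ the conjugate $a^g$ lies in $Z(N)$ as well, since $N$ is $G$-invariant and centrality is preserved under conjugation by a normalizer. Hence $W_0 \leq Z(N)$, so $W_0$ is abelian, and it is elementary abelian since its generators all have order $p$. By construction $W_0$ is a normal subgroup of $G$ generated by $G$-conjugates of the quadratic element $a$.

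Next I would run the following descent. If $W_0$ satisfies the conclusion, we are finished. Otherwise there is $x \in G$ with $[W_0, x] \neq 1$ and $[W_0, x, x] = 1$. Since $W_0 = \langle a^G \rangle$, one can choose $g \in G$ with $[a^g, x] \neq 1$, and put $c := [a^g, x]$. Then $c$ lies in $W_0$, which is abelian, so $c$ centralizes $a^g$; and $c \in [W_0, x]$, so $c$ centralizes $x$ because $[W_0, x, x] = 1$. The conjugate $a^g$ is quadratic on $V$, so Lemma~\ref{greenlemma} yields that $c$ is quadratic. Setting $W_1 := \langle c^G \rangle$, we obtain $W_1 \leq W_0 \leq Z(N)$ by $G$-normality of $W_0$, so $W_1$ is again a normal elementary abelian subgroup of $G$ generated by $G$-conjugates of a quadratic element. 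Iterating this step produces a chain $W_0 \geq W_1 \geq W_2 \geq \cdots$ of subgroups of the required form.

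To close the argument I would verify strict descent. Writing $c_i$ for the commutator produced at the $i$-th step, $c_i \in [W_i, G]$, so $W_{i+1} \leq [W_i, G]$. Because $G$ is a nilpotent $p$-group, $[W_i, G] < W_i$ whenever $W_i \neq 1$: otherwise $W_i = [W_i, G;\, n]$ would lie in $\gamma_{n+1}(G) = 1$ for large $n$. Each $W_{i+1}$ is nontrivial, being the normal closure of the nonidentity element $c_i$, so the chain strictly descends and must terminate at some nontrivial $W_n$ satisfying the stated condition. The main obstacle I would worry about is keeping each iterate abelian (and hence elementary abelian); this is exactly what the $2$-subnormality in Theorem~\ref{2subnormal} buys, since starting inside $Z(N)$ via $E \cap Z(N)$ forces every $W_i \leq W_0 \leq Z(N)$ to remain elementary abelian automatically, so the descent never leaves the abelian regime.
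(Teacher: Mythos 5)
Your proof is correct and follows essentially the same route as the paper: obtain the $2$-subnormal quadratic offender $E$ from Theorem~\ref{2subnormal}, find a quadratic element in $E \cap \Omega_1Z(N)$ for $N = \langle E^G\rangle$, and descend via Lemma~\ref{greenlemma} using nilpotence of $G$ for termination. The only (immaterial) difference is organizational: the paper fixes the chain $W_i = [\Omega_1Z(N), G;\,i]$ in advance and selects a quadratic element at the deepest level, whereas you iteratively pass to normal closures of the new quadratic commutators; both descents terminate for the same reason.
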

\begin{proof}
Note $J_e(V \rtimes G)$ is not contained in $\X_k(V
\rtimes G) = V$, and so we assume that $S = V \rtimes G$. Furthermore,
$V$ is both a PS-module of degree $k$ and an F-module for $G$. 
Choose by Theorem \ref{2subnormal} a quadratic offender $E \leq G$ which
is normalized by its conjugates in $G$. Let $N$ be the normal closure
$\langle E^G \rangle$ of $E$ in $G$, so that $E \norm N$. 
Then $E \cap \Omega_1Z(N) \neq 1$, and hence there are quadratic elements
in $\Omega_1Z(N)$. For each $i \geq 0$, set $W_i = [\Omega_1Z(N), G;\,i]$.
Let $j$ be the greatest integer such that there exists a quadratic element in
$W_j$. Choose such a quadratic element $a \in W_j$. Note that $W_j$ is not in
the center of $G$ as $Z(G)$ has no quadratic elements. Set $W = \langle a^G
\rangle$.  Then $W$ is a normal elementary abelian subgroup of $G$ generated by
quadratic elements.

Let $x \in G$. Suppose that $x$ does not centralize $W$, 
but that $[W, x, x] = 1$. Then there exists a $G$-conjugate $b$ of $a$
such that $[b, x] \neq 1$, but $[b, x, x] = 1$. Setting $c = [b, x]$, 
we have that $c$ centralizes both $b$ and $x$. Since $b$ is quadratic,
$c$ is as well by Lemma \ref{greenlemma}. Thus $c \in W_{j+1}$ and
quadratic. This contradicts the maximality of $j$. Therefore,
for every $x \in G$, we have either $[W, x] = 1$ or $[W, x, x] \neq 1$.
\end{proof}

We have recently learned that Green, H\'ethelyi, and Mazza have independently
obtained a similar subgroup (generated by \emph{last quadratics} in the
terminology of their paper) as the $W$ in Lemma \ref{normalW} under only the
assumption that $G$ has quadratic elements on $V$. See Section~8 of their paper
\cite{GHM2011}.

\begin{named}{Theorem \ref{baum}}
Suppose $p \geq 5$ and $3 \leq k \leq p$. Let $S$ be a finite $p$-group,
and set $G = S/\X_k(S)$.  If $\Baum(G) = G$, then $J_e(S) \leq \X_k(S)$. 
In particular, Oliver's conjecture holds in this case.
\end{named}
\begin{proof}
Suppose not and as in the proof of Lemma \ref{normalW}, let $S = V \rtimes G$
be a counterexample.  Let $W = \langle a^G \rangle$ be as in Lemma
\ref{normalW}. Let $\A_e(G)$ denote the set of elementary abelian subgroups
of $G$ of maximum order.

By Thompson Replacement \cite[Theorem 25.2]{GLS2}, there exists $A^* \in \A_e(G)$
acting quadratically on $W$ or else $[W, A] = 1$ for all $A \in \A_e(G)$.
The first possibility is ruled out by Lemma \ref{normalW}. 
It follows that $[W, A] = 1$, whence $W \leq A$ for all $A \in \A_e(G)$.
So $W \leq \Omega_1Z(J_e(G)) = \Omega_1Z(G)$ as $\Baum(G) = G$. Since $W$ contains
quadratic elements and $V$ is a PS-module of degree $k \geq 3$ for $G$, this is
a contradiction.  
\end{proof}

We need the following basic lemma for the proof of Theorem \ref{main}.
\begin{lem}\label{commutingquadratic}
Suppose $G$ is a $p$-subgroup of $\GL(n, p)$. Assume that $a$ and $b$ are
commuting elements of $G$ whose minimum polynomials on $V$ have degree at most
$k$ and $l$, respectively.  Then $ab$ has minimum polynomial of degree at most
$k + l - 1$.  In particular, if $a_1, \dots, a_k$ are commuting quadratic
elements, then $a_1\cdots a_k$ has minimum polynomial of degree at most $k+1$.
\end{lem}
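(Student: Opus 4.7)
The plan is to reduce the claim to a statement about commuting nilpotent operators. Since $G$ is a $p$-subgroup of $\GL(n,p)$, every element is unipotent: its only eigenvalue is $1$ and its minimum polynomial over $\mathbb{F}_p$ has the form $(x-1)^j$. Thus the hypothesis ``$a$ has minimum polynomial of degree at most $k$'' translates to $(a-1)^k = 0$ on $V$, and similarly $(b-1)^l = 0$. Set $A = a-1$ and $B = b-1$. Because $a$ and $b$ commute, so do $A$ and $B$, and we have $A^k = B^l = 0$ with $AB = BA$.

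Next I compute $ab - 1 = (1+A)(1+B) - 1 = A + B + AB$, and expand $(ab-1)^{k+l-1} = (A + B + AB)^{k+l-1}$ by the multinomial theorem, which is legitimate because the three summands pairwise commute. Every resulting monomial has the form (up to an integer coefficient) $A^i B^j (AB)^m = A^{i+m} B^{j+m}$ with $i + j + m = k + l - 1$. The key combinatorial observation is that one cannot simultaneously have $i + m \leq k-1$ and $j + m \leq l-1$, for adding these inequalities gives $(i+j+m) + m \leq k + l - 2$, i.e.\ $m \leq -1$. Hence every term vanishes, so $(ab-1)^{k+l-1} = 0$, which is exactly the assertion that the minimum polynomial of $ab$ has degree at most $k + l - 1$.

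For the ``in particular'' clause, I proceed by induction on $k$. The case $k = 1$ is trivial. If $a_1 \cdots a_{k-1}$ has minimum polynomial of degree at most $k$, then since it commutes with the quadratic element $a_k$ (all the $a_i$ commute), the main statement gives that $a_1 \cdots a_k$ has minimum polynomial of degree at most $k + 2 - 1 = k + 1$, completing the induction.

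There is really no substantive obstacle here: the only small point to verify carefully is the reduction from ``minimum polynomial of degree $\leq k$'' to ``$(a-1)^k = 0$'', which relies on the hypothesis that $G$ sits inside $\GL(n,p)$ (so that $a,b$ are $p$-elements and hence unipotent over $\mathbb{F}_p$). Everything else is a direct multinomial expansion exploiting commutativity, followed by a one-line pigeonhole argument on the exponents.
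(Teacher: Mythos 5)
Your proposal is correct and follows essentially the same route as the paper: write $ab-1=(a-1)+(b-1)+(a-1)(b-1)$, expand the $(k+l-1)$st power using commutativity, and observe that every monomial $(a-1)^{i+m}(b-1)^{j+m}$ must vanish since $i+m\leq k-1$ and $j+m\leq l-1$ cannot both hold. Your explicit remark on unipotence (reducing ``minimum polynomial of degree at most $k$'' to $(a-1)^k=0$) only makes precise a step the paper leaves implicit.
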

\begin{proof}
We have $ab - 1 = (a-1) + (b-1) + (a-1)(b-1)$.
Raise the right hand side to the $k + l - 1$st power, and distribute.
Since $a$ and $b$ commute, each term is of the form $(a-1)^i(b-1)^j$
with $i + j \geq k + l - 1$. As $(a - 1)^k = 0$ and $(b - 1)^l = 0$, one
of $(a - 1)^i$ or $(b - 1)^j$ must be zero in the respective term. Therefore,
$(ab - 1)^{k+l-1} = 0$.
The last statement follows by induction on $k$. 
\end{proof}

\begin{named}{Theorem \ref{main}}
Let $p$ be an odd prime, and suppose $S$ is a finite $p$-group
such that $S/\X(S)$ has nilpotence class at most $\log_2(p-2) + 1$. Then
$J_e(S) \leq \X(S)$, so Oliver's conjecture holds for $S$. 
\end{named}
\begin{proof}
The conjecture has been shown in \cite[Theorem \!5.2]{GHM2010} to hold
when $S/\X(S)$ has nilpotence class at most $4$, so the result is known
for small primes. Therefore, we may assume that $p \geq 5$.

Suppose not and choose a counterexample $S = V \rtimes G$ with $V$ both
a PS-module and an F-module for $G$. Let $W = \langle a^G \rangle$
be the normal elementary abelian subgroup of $G$ guaranteed by Lemma
\ref{normalW}.

As $W \norm G$, there exists an integer $k > 0$ such that $1 \neq [W, G;\,k] \leq
\Omega_1Z(G)$. Hence, there exists a $G$-conjugate $b$ of $a$, and elements
$x_1$, \dots, $x_k$ of $G$, such that
\[
1 \neq [b, x_1, \dots, x_k] \in \Omega_1Z(G).
\]
Applying ($k$ times) the standard identity $[x, y] = \inv{x}x^y$ one sees
that
\[
z := [b, x_1, \dots, x_k] = \prod_{j=0}^k\,\,\prod_{(i_1, \dots, i_j) \in
X_j}(b^{(-1)^{j+1}})^{x_{i_1}\cdots x_{i_j}},
\]
where $X_j$ is the set of strictly increasing sequences of length $j$ from
$\{1, \dots, k\}$. It follows that $z$ is a product of $\sum_{j=0}^k
\binom{k}{j} = 2^k$ commuting conjugates of $b$ and $\inv{b}$. By
Lemma \ref{commutingquadratic}, the degree of the minimum polynomial of
$z$ on $V$ is at most $2^k + 1$. On the other hand, the degree of the minimum
polynomial of $z$ is $p$, because $V$ is a PS-module for $G$. Thus $p < 2^k +
2$, and hence $\log_2(p-2) < k$. By definition of $k$, we have that the class
of $G$ is at least $k + 1$. Therefore the class of $G$ is strictly larger
than $\log_2(p-2) + 1$, and this completes the proof.
\end{proof}

\section{Speculation regarding an inductive approach}\label{inductiveapproach}

Lemma \ref{normalW} applied with $k=3$ gives some hint as to how an inductive
approach to the conjecture might proceed at least for $p \geq 5$. (In
view of the subgroup generated by last quadratics in \cite[Section~8]{GHM2011},
the approach applies for all odd $p$.) We record here an observation that forms
the main motivation for this idea.

\begin{prop}\label{X_3(G)proper}
Suppose that $p \geq 5$ and $S$ is a $p$-group such that $J_e(S)$ is not
contained in $\X_3(S)$. Let $G = S/\X_3(S)$. Then $\X_3(G)$ is a proper
subgroup of $G$. 
\end{prop}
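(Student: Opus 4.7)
The plan is to argue by contradiction, assuming $\X_3(G)=G$, and to use the $W$ furnished by Lemma~\ref{normalW} (applied with $k=3$) as a test subgroup on which the series defining $\X_3(G)$ cannot propagate. Since $J_e(S)\nleq \X_3(S)$, in particular $\X_3(S)$ is proper in $S$, and so by the discussion preceding Definition~3 of PS-modules, $V=\Omega_1Z(\X_3(S))$ is a PS-module of degree $3$ for $G$; equivalently, no nontrivial element of $\Omega_1Z(G)$ has quadratic minimal polynomial on $V$. Lemma~\ref{normalW} (with $k=3$) yields a normal elementary abelian subgroup $W=\langle a^G\rangle$ of $G$ generated by $G$-conjugates of a quadratic element $a$, with the crucial dichotomy that for every $x\in G$, either $[W,x]=1$ or $[W,x,x]\neq 1$.

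Now suppose $\X_3(G)=G$. By Definition~\ref{oliversubgroup} there is a chain
\[
1=Q_0<Q_1<\cdots<Q_n=G
\]
of normal subgroups of $G$ satisfying $[\Omega_1(C_G(Q_{i-1})),Q_i,Q_i]=1$ for every $1\leq i\leq n$. I would then prove by induction on $i$ that $Q_i\leq C_G(W)$. For $i=1$, since $W$ is elementary abelian we have $W\leq \Omega_1(G)=\Omega_1(C_G(Q_0))$, whence $[W,Q_1,Q_1]=1$; for each $q\in Q_1$ this forces $[W,q,q]=1$, and the dichotomy in Lemma~\ref{normalW} then forces $[W,q]=1$. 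For the inductive step, if $Q_{i-1}$ already centralizes $W$ then $W\leq \Omega_1(C_G(Q_{i-1}))$, so $[W,Q_i,Q_i]=1$, and the same element-by-element application of the dichotomy gives $Q_i\leq C_G(W)$.

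Taking $i=n$ yields $G\leq C_G(W)$, i.e.\ $W\leq Z(G)$, and since $W$ is elementary abelian in fact $W\leq \Omega_1Z(G)$. But $W$ contains the quadratic element $a$, contradicting the fact that $V$ is a PS-module of degree $3$ for $G$. This contradiction shows $\X_3(G)$ must be proper in $G$. The only real content beyond bookkeeping is the inductive propagation; the potential obstacle is making sure the dichotomy of Lemma~\ref{normalW} is applied \emph{elementwise} to $Q_i$ (so that $[W,Q_i,Q_i]=1$ really does promote to $[W,Q_i]=1$), but this is immediate since $[W,q,q]\leq[W,Q_i,Q_i]$ for every $q\in Q_i$.
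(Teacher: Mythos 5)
Your proposal is correct and follows essentially the same argument as the paper: both use the defining normal series of $\X_3(G)$ together with the dichotomy of Lemma~\ref{normalW} to show the series can never leave $C_G(W)$ (you phrase it as induction up the chain under the assumption $\X_3(G)=G$; the paper takes the first $Q_j$ escaping $C_G(W)$ and thereby proves the slightly stronger containment $\X_3(G)\leq C_G(W)$). The bookkeeping steps you flag ($W\leq\Omega_1(C_G(Q_{i-1}))$ once $Q_{i-1}$ centralizes the elementary abelian $W$, and the elementwise promotion of $[W,Q_i,Q_i]=1$ to $[W,Q_i]=1$) are all sound.
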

\begin{proof}
As $J_e(S)$ is not contained in $\X_3(S)$, $\X_3(S)$ is a proper subgroup of
$S$. Let $W$ be as in Lemma~\ref{normalW}. Thus $W$ is a normal
elementary abelian subgroup of $G$ generated by quadratic elements.
Set $X := C_G(W)$, and note that $X$ is a proper normal subgroup of $G$.
We will show that $\X_3(G) \leq X$, and this will settle the claim.  Suppose to
the contrary that $\X_3(G)$ is not contained in $X$. Let $1 = Q_0 < Q_1 <
\cdots < Q_n = \X_3(G)$ be a series of normal subgroups of $G$ such that 
\[ 
[\Omega_1(C_G(Q_{i-1})), \,Q_{i}, \,Q_i] = 1
\]
for each $i$. Let $j$ be the greatest integer such that $Q_{j-1} \leq X$. Let
$x \in Q_{j} - X$. Then as $W \leq \Omega_1Z(X) \leq \Omega_1(C_G(Q_{j-1}))$,
we have
\[
[W, \,x, \,x] \leq [\Omega_1(C_G(Q_{j-1})), \,x, \,x] = 1,
\]
a contradiction to Lemma \ref{normalW} because $x \nin X$.  Therefore, $\X_3(G)
\leq X$, and $\X_3(G)$ is a proper subgroup of $G$. 
\end{proof}

For an inductive approach, we would choose $S$ to be minimal subject to the
condition that $J_e(S)$ is not contained in $\X_3(S)$. We would then like to
show that $J_e(G)$ is not contained in $\X_3(G)$ to contradict the minimality
of $S$. By the proof of Proposition \ref{X_3(G)proper}, we have that $\X_3(S)
\leq C_G(W)$. However, the content of the proof of Theorem \ref{baum} is
that $J_e(G)$ is also contained in $C_G(W)$. In fact, one can see this
explicitly in the example of Green, H\'ethelyi, and Lilienthal in Section~5 of
their first paper \cite{GHL2008}. In this case $G \cong C_3 \wr C_3$ acting
on an $8$-dimensional module $V$, and we have that $X = W = J_e(G) = \X_3(G)$
is the base subgroup of $G$. Note that $G$ is not a counterexample to Oliver's
conjecture, as can be seen from subsequent work (\!\!\cite[Theorem~1.1]{GHM2010} or
\cite{GHM2011}). But this example shows that taken alone, the existence of a
normal elementary abelian subgroup of $G$ generated by quadratic elements is
not enough to carry out this inductive argument.  

\section*{Acknowledgements}

I would like to thank Ian Leary for introducing me to Oliver's conjecture over
lunch some time ago. George Glauberman provided me with the starting point for
these ideas, and I am grateful to him for his encouragement while writing up
the details. Theorem \ref{main} was proven on the beautiful Isle of Skye at the
Conference on Algebraic Topology, Group Theory and Representation Theory in
June 2009; thanks to the organizers for providing such a great setting for
mathematics.  I am indebted to the referee for a very careful reading of this
paper and many helpful comments on style and substance which greatly improved
the exposition.  Finally, I would like to thank David Green, L\'aszl\'o
H\'ethelyi, and Nadia Mazza for keeping me up to date on their work, and my
thesis advisor Ronald Solomon for his constant encouragement and support.

\bibliographystyle{amsalpha}{}
\bibliography{/home/jlynd/math/research/mybib}

\end{document}